\newtheorem{theorem}{Theorem}[section]
\newtheorem{definition}{Definition}[section]
\newcommand{\R}{\mathbb{R}}
\newcommand{\XC}{\mathcal{X}}
\newcommand{\LC}{\mathcal{L}}
\newcommand{\BC}{\mathcal{B}}
\newcommand{\ZC}{\mathcal{Z}}
\newcommand{\YC}{\mathcal{Y}}
\newcommand{\FC}{\mathcal{F}}
\newcommand{\MC}{\mathcal{M}}
\newcommand{\EC}{\mathcal{E}}
\newcommand{\NC}{\mathcal{N}}
\newcommand{\WC}{\mathcal{W}}
\newcommand{\TC}{\mathcal{T}}
\newcommand{\QC}{\mathcal{Q}}
\newcommand{\PC}{\mathcal{P}}
\newcommand{\SC}{\mathcal{S}}
\journal{Signal Processing}
\begin{document}

\begin{frontmatter}

%% Title, authors and addresses

%% use the tnoteref command within \title for footnotes;
%% use the tnotetext command for theassociated footnote;
%% use the fnref command within \author or \address for footnotes;
%% use the fntext command for theassociated footnote;
%% use the corref command within \author for corresponding author footnotes;
%% use the cortext command for theassociated footnote;
%% use the ead command for the email address,
%% and the form \ead[url] for the home page:
%% \title{Title\tnoteref{label1}}
%% \tnotetext[label1]{}
%% \author{Name\corref{cor1}\fnref{label2}}
%% \ead{email address}
%% \ead[url]{home page}
%% \fntext[label2]{}
%% \cortext[cor1]{}
%% \affiliation{organization={},
%%             addressline={},
%%             city={},
%%             postcode={},
%%             state={},
%%             country={}}
%% \fntext[label3]{}

\title{A Novel Transformed Fibered Rank Approximation with Total Variation	Regularization for Tensor Completion}

%% use optional labels to link authors explicitly to addresses:
%% \author[label1,label2]{}
%% \affiliation[label1]{organization={},
%%             addressline={},
%%             city={},
%%             postcode={},
%%             state={},
%%             country={}}
%%
%% \affiliation[label2]{organization={},
%%             addressline={},
%%             city={},
%%             postcode={},
%%             state={},
%%             country={}}

\author[1]{Ziming Chen}
%\ead{rawat@mail.com}

\author[1]{Xiaoqing Zhang\thanks{Corresponding author: 52215500017@stu.ecnu.edu.cn}}
%\ead{nair@mail.com}

%\author[2]{Han Theh Thanh \corref{cor1}}
%\ead{thanh@mail.com}

\address[1]{East China Normal University, Shanghai, People’s Republic of China}
%\address[2]{Sayahna Foundation, Jagathy, Trivandrum 695014, India}
%\address[3]{\TeX{} Users Group, Providence, MA, USA}

\cortext[cor1]{Corresponding author: Xiaoqing Zhang}

% \affiliation{organization={},%Department and Organization
%             addressline={}, 
%             city={},
%             postcode={}, 
%             state={},
%             country={}}

\begin{abstract}
%% Text of abstract
Recently, tensor fibered rank has demonstrated impressive performance by effectively leveraging the global low-rank
property in all directions for low-rank tensor completion (LRTC). 
However, it still has some limitations. 
Firstly, the typical tensor fibered rank approximation based on tensor nuclear norm (TNN) 
processes fixed and data-independent transformation, which may not be optimal for the underlying tensor structure. 
Secondly, it ignores the local piecewise smoothness of the dataset. 
To address these limitations, we present a nonconvex learnable transformed fibered nuclear norm (NLTFNN) model for LRTC,
which uses a learnable transformed fibered nuclear norm with Log-Determinant (LTFNNLog) as tensor fibered rank approximation, 
and employs a total variation (TV) regularization to explore local piecewise smoothness. 
An efficient algorithm based on the alternating direction method of multipliers (ADMM) is developed to solve NLTFNN and 
the convergence of the algorithm is proved theoretically. 
Experiments on various datasets show the superiority of NLTFNN over several existing methods.

\end{abstract}

%%Graphical abstract
% \begin{graphicalabstract}
% %\includegraphics{grabs}
% \end{graphicalabstract}

%%Research highlights
% \begin{highlights}
% \item Research highlight 1
% \item Research highlight 2
% \end{highlights}

\begin{keyword}
%% keywords here, in the form: keyword \sep keyword

%% PACS codes here, in the form: \PACS code \sep code

%% MSC codes here, in the form: \MSC code \sep code
%% or \MSC[2008] code \sep code (2000 is the default)
tensor completion \sep
tensor fibered rank \sep
total variation regularization

\end{keyword}

\end{frontmatter}

%% \linenumbers

%% main text
\section{Introduction}
LRTC has found extensive application in various practical problems, such as image denoising \cite{GXC20}, subspace clustering \cite{FLY22}, multi-task learning \cite{ZYZ22}, and image inpainting \cite{QML20}.
It aims to reconstruct the original data by effectively exploiting the inherent redundancy present in incomplete data. Mathematically, the typical LRTC model can be expressed as follows:
\begin{equation}
	\label{lrtc}
	\begin{aligned}		
		\underset{\ZC} \min 
		&\quad {\rm rank}(\ZC) \\
		{\rm s.t.} 
		& \quad P_{\Omega}(\ZC) = P_{\Omega}(\BC)	,
	\end{aligned}
\end{equation}
where $\ZC$ is the underlying tensor, 
$\BC$ is the observed tensor,
$\Omega$ is the index set of known elements in $\BC$,
rank$(\cdot)$ is a rank of the tensor,
and $P_{\Omega}(\cdot)$ is a projection operator on $\Omega$.

%where $\ZC \in \R^{n_1 \times n_2 \times n_3}$ 
%is the underlying tensor, $\BC$ is an observed tensor,
%$\Omega\subset\{1,2,\dots,n_1\}\times\{1,2,\dots,n_2\}
%\times\{1,2,\dots,n_3\}$ is the index set of known elements,
%$P_{\Omega}(\cdot)$ is an operator defined as
%\begin{equation*}
%	P_{\Omega}(\ZC)(i,j,k)=
%	\left\{
%	\begin{aligned}
	%		\ZC(i,j,k),\quad   & if (i,j,k)\in\Omega .\\
	%		0,\quad\quad\quad\quad & otherwise.
	%	\end{aligned}
%	\right.	
%\end{equation*}

In contrast to the unique definition of matrix rank, the definition of tensor rank is not unique, with different definitions arising from various tensor decompositions,
such as CANDECOMP/PARAFAC (CP) rank \cite{KB09}, Tucker rank \cite{TUC66}, and tensor tubal rank \cite{KMM11}.  
The CP-rank is defined in a simple manner, but its computation is NP-hard. % \cite{HL13}.
And the Tucker rank is represented by a vector where the $i$th element denotes the rank of mode-$i$ unfolding matrix, which of big size.
%but it is inevitable that the internal structure of high-dimensional data will be destroyed in the process of tensor mode-$i$ unfolding. 
%In addition, the tensor train (TT) rank \cite{BPT17} is proposed by the TT method, which uses a well-balanced scheme to reshape the original tensor into a higher order tensor.
%However, this process also results in the disruption of the inherent structure of the initial tensor.
In addition, tensor tubal rank based on tensor singular value decomposition (t-SVD) \cite{KBH13} only considers the low-rankness of mode-$3$.
% and its convex envelope TNN \cite{ZEA14} has become a popular method in tensor applications.
Therefore, Zheng et al. \cite{ZHZ20} proposed tensor fibered rank by 
extending tubal rank and generalizing t-SVD, which provides a simultaneous and flexible representation of the correlations along each mode.
Then, 3DTNN and 3DLogTNN were also proposed as the convex and nonconvex relaxation of tensor fibered rank, respectively.
%Tensor singular value decomposition (t-SVD) proposed by Kilmer et al.  has become a popular method in tensor applications, because of its ability to preserve high-dimensional structure and the rapid computational process.
%Based on t-SVD, Semerci et al. \cite{SHK14} proposed the tensor nuclear norm (TNN), while Zhang et al.  applied TNN to the low-rank tensor completion model.
%Note that TNN relies heavily on the Discrete Fourier Transform (DFT) along mode-$3$, but the complicated internal relationships of high-dimensional data may not be fully exploited if the transform is restricted to mode-$3$.
%In order to capture the correlations of the entire tensor, Zheng et al. \cite{ZHZ20} proposed tensor fibered rank by generalizing t-SVD to every mode, thereby achieving a more accurate and flexible low-rank characterization.
%And Su et al. \cite{SWL19} applied DFT to each dimension to deal with the low-rank completion problem, whose idea can be also explained by tensor fibered rank.

However, current tensor recovery methods, which are based on tensor fibered rank, predominantly depend on the t-SVD framework utilizing the Discrete Fourier Transform (DFT). 
It may not conducive to satisfying the low-rank property for tensor dataset \cite{WGF22}. 
In order to achieve an optimal low-rank representation, Xu et al. \cite{XZN19} replace the DFT with the Discrete Cosine Transform (DCT),
and Jiang et al. \cite{JNZ20} suggest the semi-invertible framelet transform to represent the transformation.
Unlike the pre-defined transforms mentioned above, Jiang et al. \cite{JZZ21} proposed learnable transform, which is updated during the course of the algorithmic iteration. 
Then, Wu et al. \cite{WGF22} proposed a LRTC model that utilizes multi-dimensional learnable transforms to fully exploit the low-rankness. 
Moreover, the inflexible treatment of singular values in the convex tensor fibered rank approximation is analogous to that of TNN.
To address this shortcoming, nonconvex surrogates, including the Log-Determinant function \cite{JHZ17}, the Schatten-$p$ function \cite{GF20}, and the Laplace function \cite{XZJ19}, are frequently employed as replacements for TNN in LRTC models. 
Building upon the aforementioned research, we introduce a novel method for tensor fibered rank approximation, namely a  learnable transformed fibered nuclear norm with Log-Determinant (LTFNNLog), which combines learnable transforms and nonconvex function. 
The proposed LTFNNLog enables simultaneous representation of the inter-mode correlations while achieving enhanced accuracy in the low-rankness of each mode.

In addition, local smoothness is a crucial attribute of tensor datasets, such as image and video.
Therefore, it is insufficient to consider only the low-rank property when solving the LRTC problem.
Liu et al. \cite{LHS15} employed TV regulariation in tensor recovery to preserve the piecewise smoothness.
%LHS15
%Then, it has been shown that TV regularization can help to preserve the piecewise smoothness of the data in image processing. %\cite{LHS15, SDZ17}.
%it is insufficient to consider only the low-rankness when solving the LRTC problem.
%Local smoothness is a crucial attribute of image or video data, which is frequently represented by total variation (TV) regularization \cite{ROF92}.
%Subsequently, TV regularization was employed to address other noise removal scenarios.
%Moreover, due to the piecewise smoothness of the tensor dataset, if the recovery process is limited to considering the low-rank nature, the resulting reconstruction will not be optimal.
%Moreover, it has been shown that TV regularization can help to preserve the piecewise smoothness of the data in image processing. %\cite{LHS15, SDZ17}.
%It is crucial to acknowledge that there are different definitions of tensor TV operator. 
%Yuan et al. \cite{YZS12} proposed a spectral-spatial adaptive TV model, and Duan et al. \cite{DDZ22} use  an anisotropic TV regularization by utilizing the first-order forward finite-difference tensor along each direction.
%whereas Aggarwal et al. \cite{AM16} developed a TV operator by integrating two-dimensional TV along the spatial dimension and one-dimensional TV along the spectral dimension.
For the same purpose, we apply TV regularization to the proposed model, thereby enhancing the internal smoothness of the data and improving the effectiveness of tensor completion.

The main contributions of this paper are illustrated as follows.
\begin{enumerate}
	\item 
%	We present a learnable transformed fibered norm with Log-Determinant as an approximation of tensor rank and implement the TV regularization to propose a tensor completion model, called LogLTFN-TV. Our new model can both flexibly explore the tensor low-rankness and maintain the piecewise smoothness.
	 
	We introduce a nonconvex learnable transformed fibered nuclear norm model for LRTC, named NLTFNN. 
	In this novel model, LTFNNLog is incorporated to approximate tensor fibered rank and effectively explore the low-rank structure of three modes in tensors,
	and TV regularization is also integrated to preserve the inherent piecewise smoothness.

	\item A framework for solving the new model is established based on ADMM.
	It has been demonstrated that, under specific conditions, the generated sequences converge to the Karush-Kuhn-Tucker (KKT) point.
	
	%	\item Numerical experiments are performed to validate the superior performance of the LTFN-TV and LogLTFN-TV models in comparison to other models.
	
\end{enumerate}

The remainder of this paper is organized as follows.
In section 2, we introduce the fundamental symbols and related definitions of tensors.
In Section 3, we present the NLTFNN model and solve it by an effective algorithm.
In Section 4, the convergence of the algorithm is demonstrated under specific conditions.
Experimental results that illustrate the feasibility and effectiveness of NLTFNN model are present in Section 5. 
Finally, we conclude this paper with a brief summary.
%% The Appendices part is started with the command \appendix;
%% appendix sections are then done as normal sections
%% \appendix

%% \section{}
%% \label{}
\section{Notations and Preliminaries}

We use the calligraphic font $\XC$, 
uppercase characters $X$ 
and lowercase letters $x$ to denote tensors, 
matrices and scalars, respectively. 
The domain of real numbers is denoted as $\mathbb{R}$. 
For a 3-order tensors $\XC = [a_{ijk}] \in \mathbb{R}^{n_1\times n_2\times n_3}$, 
%we use $\XC(i,:,:),\XC(:,i,:),\XC(:,:,i)$ to denote 
%the $i$th horizontal, lateral, and frontal slice, respectively, 
we use $X_1^{(i)}\in\R^{n_2\times n_3}$,
$X_2^{(i)}\in\R^{n_3\times n_1}$ and
$X_3^{(i)}\in\R^{n_1\times n_2}$ to denote the $i$th
mode-$1$, mode-$2$ and mode-$3$
slices of $\XC$, respectively.
The Frobenius norm of $\XC$ is defined as 
$\|\XC\|_F = \sqrt{\sum_{i,j,k}|a_{ijk}|^2},$ and
the $l_1$ norm of $\XC$ is defined as 
$\|\XC\|_1 = \sum_{i,j,k}|a_{ijk}|$.
The notation $\hat{\XC}_i$ means the DFT along mode-$i$ of $\XC$,
i.e., $\hat{\XC}_i = {\rm fft}(\XC,[\,],i)$.
Naturally, we have $\XC = {\rm ifft}(\hat{\XC}_i,[\,],i)$.

For an $N$-order tensor $\XC \in \R ^{n_1\times n_2\times\cdots\times n_{N}}$,
the mode-$i$ unfolding operator performing on $\XC$
is denoted as
$X_{(i)}= {\rm unfold}(\XC,i)\in
\R^{n_i\times \prod_{k\neq i}n_k},$ 
and its inversion is denoted by
$\XC = {\rm fold}({\rm unfold}(\XC,i),i).$

\begin{definition}(Mode-$i$ Product \cite{LLL22})
	The mode-$i$ product between a tensor $\XC\in
	\R^{n_1\times n_2\times\cdots\times n_{N}}$ and a matrix
	$L \in \R^{J\times n_i}$ is defined as
	\begin{equation*}
		\XC_{L,i} = \XC\times_{i} L
		= {\rm fold}(LX_{(i)},i)\in
		\R^{n_1\times\cdots\times n_{i-1}\times J\times n_{i+1}\times\cdots\times n_{N}}.
	\end{equation*}
\end{definition}

\begin{definition}(Mode-$i$ t-SVD \cite{ZHZ20})
	A 3-order tensor $\XC\in\R^{n_1\times n_2\times n_3}$ has
	the t-SVD along mode-$i$, i.e.,
	\begin{equation*}
		\XC = \mathcal{U}_i \ast_i \SC_i \ast_i
		\mathcal{V}_i^{T_i},
		\quad i=1,2,3,
	\end{equation*}
	where $\mathcal{U}_i$ and $\mathcal{V}_i$ are the mode-$i$
	orthogonal tensors, $\SC_i$ is the mode-$i$ diagonal tensor, and $T_i$ means the mode-$i$ transpose.
\end{definition}

\begin{definition}(Mode-$i$ TNN \cite{ZHZ20})
	The mode-$i$ TNN of a 3-order tensor
	$\XC\in\R^{n_1\times n_2\times n_3}$ is defined as 
	the sum of singular values of all the mode-$i$
	slices of tensor $\hat{\XC}_i$, i.e.,
	\begin{equation*}
		\|\XC\|_{TNN_i} = \sum_{k=1}^{n_i}
		\|(\hat{X}_i)_i^{(k)}\|_* .
	\end{equation*}
\end{definition}

%To make the process of mode-$i$ t-SVD more tractable, 
%we show it in Algorithm \ref{t-SVD}.
%
%\begin{algorithm}
%	\caption{Mode-$i$ t-SVD for 3-order tensor}
%	\label{t-SVD}
%	\KwIn{$\XC\in\mathbb{R}^{n_1\times n_2\times n_3}$ and $i$.} 
%	$\hat{\XC}_i = {\rm fft}(\XC,[],i).$\\
%	\For{$j=1:n_i$}{
	%		{$[U,S,V]$ = svd($(\hat{\XC}_i)_i^{(j)}$).} \\
	%		{$(\hat{\mathcal{U}}_i)_i^{(j)} = U; 
		%			(\hat{\mathcal{S}}_i)_i^{(j)} = S; 
		%			(\hat{\mathcal{V}}_i)_i^{(j)} = V.$  } 
	%	}
%	{$\mathcal{U}_i={\rm ifft}(\hat{\mathcal{U}}_i,[],i)$.} \\
%	{$\mathcal{S}_i={\rm ifft}(\hat{\mathcal{S}}_i,[],i),$}\\
%	{$\mathcal{V}_i={\rm ifft}(\hat{\mathcal{V}}_i,[],i).$}\\
%	\KwOut{$\mathcal{U}_i,\mathcal{S}_i,\mathcal{V}_i$.}
%\end{algorithm}

For a 3-order tensor $\XC\in\R^{n_1\times n_2\times n_3}$
and a matrix $L\in \R^{J\times n_i}$, 
we have the decomposition $\XC_{L,i} = \mathcal{U}_i \ast_i \SC_i \ast_i \mathcal{V}_i^{T_i}.$
Then, we use the symbol ${\rm rank}_{f_i,L}(\XC)$ to denote the mode-$i$ transformed fibered rank of $\XC$ 
with the transformed matrix $L$, 
which defined as the number of nonzero mode-$i$ fibers of 
$\SC_i$.
And	the transformed fibered rank of  $\XC$ with $L$
is a vector denoted as ${\rm rank}_{f,L}(\XC),$ where
\begin{equation*}
	{\rm rank}_{f,L}(\XC)=({\rm rank}_{f_1,L}(\XC),
	{\rm rank}_{f_2,L}(\XC),{\rm rank}_{f_3,L}(\XC)).
\end{equation*}

%\begin{definition}(Mode-$i$ Transformed Fibered Rank And Multi-rank)
%	Let $\XC\in\R^{n_1\times n_2\times n_3}$ be 
%	a 3-order tensor and $L\in \R^{J\times n_i}$ 
%	be a matrix. 
%	Symbol ${\rm rank}_{f_i,L}(\XC)$ denotes the mode-$i$ 
%	transformed fibered rank of $\XC$ 
%	with the transformed matrix $L$,
%	which defined as the number of nonzero mode-$i$ fibers of 
%	$\SC_i$,  where $\XC_{L,i} = \mathcal{U}_i \ast_i \SC_i 
%	\ast_i \mathcal{V}_i^{T_i}.$
%	And the mode-$i$ transformed multi-rank of $\XC$ with $L$
%	is a vector ${\rm rank}_{m_i,L}(\XC)\in\R^{n_i}$,
%	whose $k$th element is the rank of $k$th mode-$i$
%	slice of $\hat{\YC}_i,$ where
%	$\hat{\YC}_i = {\rm fft}(\YC{,[],i}), \YC = \XC_{L,i} $.
%	Therefore, ${\rm rank}_{f_i,L}(\XC) = \max({\rm rank}_{m_i,L}(\XC))$.
%\end{definition}

%\begin{definition}(Transformed Fibered Rank) 
%	For a 3-order tensor $\XC\in\R^{n_1\times n_2\times n_3}$ 
%	and a matrix $L\in \R^{J\times n_i}$, 
%	the transformed fibered rank of  $\XC$ with $L$
%	is a vector denoted as ${\rm rank}_{f,L}(\XC),$
%	which is defined as follows,
%	\begin{equation*}
%		{\rm rank}_{f,L}(\XC)=({\rm rank}_{f_1,L}(\XC),
%		{\rm rank}_{f_2,L}(\XC),{\rm rank}_{f_3,L}(\XC)).
%	\end{equation*}
%\end{definition}

In order to give the
approximation of transformed fibered rank,
we propose transformed fibered nuclear norm (TFNN).
With the transformed matrices $L_i\in \R^{J\times n_i}, i=1,2,3,$ 
the TFNN of
$\XC\in\R^{n_1\times n_2\times n_3}$ is defined as:
\begin{equation*}
	\|\XC\|_{TFNN} = \sum_{i=1}^{3} a_i 
	\|\XC_{L_i,i}\|_{TNN_i},
\end{equation*}
where $a_i\geq 0 $ and $\sum_{i=1}^{3}a_i =1.$

%\begin{definition}(Mode-$i$ Transformed TNN)
%	Given a 3-order tensor $\XC\in\R^{n_1\times n_2\times n_3}$ 
%	and a matrix $L\in \R^{J\times n_i}$,
%	the mode-$i$ transformed TNN of $\XC$ with $L$ is defined as
%	\begin{equation*}
	%		\|\XC\|_{TTNN_i} = \|\XC_{L,i}\|_{TNN_i}.
	%	\end{equation*}
%\end{definition}

%\begin{definition}(TFN)
%	The transformed fibered norm  of a tensor 
%	$\XC\in\R^{n_1\times n_2\times n_3}$ 
%	and matrices $L_i\in \R^{J\times n_i}, i=1,2,3,$ 
%	is defined as
%	%	\begin{equation*}
%		%		\|\XC\|_{TFN} = \sum_{i=1}^{3}a_i\|\XC\|_{TTNN_i}
%		%		= \sum_{i=1}^{3} a_i 
%		%		\|\XC_{L_i,i}\|_{TNN_i},
%		%	\end{equation*}
%	\begin{equation*}
%		\|\XC\|_{TFN} = \sum_{i=1}^{3} a_i 
%		\|\XC_{L_i,i}\|_{TNN_i},
%	\end{equation*}
%	where $a_i\geq 0 $ and $\sum_{i=1}^{3}a_i =1.$
%\end{definition}

In fact, TFNN is the convex relaxation of transformed fibered rank, and inevitably constrains within the framework of TNN.
In the real world, singular values have obvious physical meaning and need to be treated differently, but TNN simply sums up all singular values.
To overcome this limitation, we take advantage of the nonconvex function and propose a nonconvex approximation called the
TFNN based on Log-Determinant (TFNNLog).
Firstly, we give the definition of mode-$i$ LogTNN 
of $\XC$ as follows:
\begin{align*}
	\Psi_i(\XC) 
%	&= \sum_{j=1}^{n_i}{\rm logdet}  \left(I+\left( (\hat{X}_i)_i^{(j)} \right)^H (\hat{X}_i)_i^{(j)} \right)\\
	&= \sum_{j=1}^{n_i}
	\sum_{k=1}^{m_i} {\rm log}
	\left(1+\sigma_k^2\left( (\hat{X}_i)_i^{(j)} \right)\right),
\end{align*}
where $\sigma_k(X)$ is the $k$th singular value of $X$ 
and $m_i = \min\{ \{n_1,n_2,n_3\}\textbackslash \{n_i\} \}$.
Then, we propose TFNNLog of $\XC$ 
and $L_i\in \R^{J\times n_i}, i=1,2,3,$ i.e.,
\begin{equation*}
	\|\XC\|_{TFNNLog} = \sum_{i=1}^{3} a_i
	\Psi_i(\XC_{L_i,i}),
\end{equation*}
where $a_i\geq 0 $ and $\sum_{i=1}^{3}a_i =1.$

%\begin{definition}(Mode-$i$ LogTNN)
%	\label{mode_iLog}
%	The mode-$i$ Log-based TNN of a tensor
%	$\XC\in\R^{n_1\times n_2\times n_3}$ is defined as
%	\begin{align*}
%		\Psi_i(\XC) &= \sum_{j=1}^{n_i}{\rm log}  det\left(I+\left( (\hat{X}_i)_i^{(j)} \right)^H (\hat{X}_i)_i^{(j)} \right)\\
%		&= \sum_{j=1}^{n_i}
%		\sum_{k=1}^{m_i} {\rm log}
%		\left(1+\sigma_k^2\left( (\hat{X}_i)_i^{(j)} \right)\right),
%	\end{align*}
%	where $\sigma_k(X)$ is the $k$th singular value of $X$ 
%	and $m_i = \min\{ \{n_1,n_2,n_3\}\textbackslash \{n_i\} \}$.
%\end{definition}

%\begin{definition}(Mode-$i$ Transformed LogTNN)
%	The mode-$i$ transformed LogTNN of a tensor
%	$\XC\in\R^{n_1\times n_2\times n_3}$ and 
%	a matrix $L\in \R^{J\times n_i}$ 
%	is defined as
%	\begin{align*}
	%		\Psi_i(\XC_{{L},i})=\Psi_i(\XC\times_i L).
	%	\end{align*}
%	
%\end{definition}

%\begin{definition}(LogTFN) 
%	The transformed fibered norm based on Log-Determinant
%	of a tensor  $\XC\in\R^{n_1\times n_2\times n_3}$ 
%	and matrices $L_i\in \R^{J\times n_i}, i=1,2,3,$ 
%	is defined as
%	\begin{equation*}
%		\|\XC\|_{LogTFN} = \sum_{i=1}^{3} a_i
%		\Psi_i(\XC_{L_i,i}),
%	\end{equation*}
%	where $a_i\geq 0 $ and $\sum_{i=1}^{3}a_i =1.$
%\end{definition}

\section{NLTFNN Model And Solving Algorithm}

\subsection{Proposed NLTFNN Model}

In order to exploit the inherent low-rank property of the data along every mode, we employ the learnable transformed fibered nuclear norm with Log-Determinant, that is, the LTFNNLog, as a surrogate of tensor rank as follows:
\begin{equation}
	\label{proposed0}
	\begin{aligned}		
		\underset{\mathcal{Z},L_i}\min &\quad
		\|\ZC\|_{TFNNLog} \\
		{\rm s.t.} 	&\quad   L_iL_i^T=I_i,\\
		& \quad P_{\Omega}(\ZC)=P_{\Omega}(\BC)	,
	\end{aligned}
\end{equation}
where $L_i, i=1,2,3,$ are the learnable transformed
matrices of mode-$i$. 
%As an approximation of transformed fibered rank,
%we employ the LogTFN as a surrogate of tensor rank in model \eqref{lrtc}. 
%Moreover, considering that a suitable tranform may enhance the effectiveness of LRTC,
%we combine learnable transforms along each mode with LogTFN.
%
%
%In order to exploit the inherent low-rank property of the data along every mode, 
%we employ the LogTFN with learnable transforms as a surrogate of tensor rank in \eqref{lrtc}. 

In addition, we add TV regularization in \eqref{proposed0}, as it enables us to exploit local smoothness in the data. 
Consequently, our proposed NLTFNN model can be formulated as
follows:
\begin{equation}
	\label{proposed}
	\begin{aligned}		
		\underset{\mathcal{Z},L_i}\min &\quad
		\|\ZC\|_{TFNNLog} + \tau\|\ZC\|_{TV} \\
		{\rm s.t.} 	&\quad   L_iL_i^T=I_i,\\
		& \quad P_{\Omega}(\ZC)=P_{\Omega}(\BC)	,
	\end{aligned}
\end{equation}
where $\tau$ is regularization parameter,
%$L_i (i=1,2,3)$ are the learnable transformed
%matrices of mode-$i$. 
and the TV regularization for a 3-order tensor is defined as
\begin{equation*}
	\|\ZC\|_{TV}=\|D\ZC\|_1=\|D_h\ZC\|_1+\|D_v\ZC\|_1+\|D_t\ZC\|_1,
\end{equation*}
where $D=[D_h,D_v,D_t]$ is the three-dimensional 
difference operator, and $D_h,D_v,D_t$ denote the 
first-order forward finite difference operators along 
the horizontal, vertical, and tubal direction, respectively.
More details about TV regularization can be found in \cite{QBN21}.

% Here, for the $(i,j,k)$th entry, each forward finite-difference
%operator is defined by
%\begin{equation*}
%	\left\{
%	\begin{aligned}
	%		D_h\ZC&=\ZC(i,j+1,k)-\ZC(i,j,k)\\
	%		D_v\ZC&=\ZC(i+1,j,k)-\ZC(i,j,k)\\
	%		D_t\ZC&=\ZC(i,j,k+1)-\ZC(i,j,k)
	%	\end{aligned}
%	\right.	
%\end{equation*}
%where the period boundary conditions are used for the finite-difference operators, i.e.,$\ZC(i,1,k)-\ZC(i,n_2,k),\ZC(1,j,k)-\ZC(n_1,j,k),\ZC(i,j,1)-\ZC(i,j,n_3)$ are used for the differences on the boundary. 

\subsection{Solving Algorithm Based on ADMM Framework}

In order to solve model \eqref{proposed} 
within the framework of ADMM, we denote $\mathcal{O}$ as a tensor with all entries being $0$, and
perform the equivalent form of \eqref{proposed}
as follows:

\begin{equation}
	\label{proposed_eq}
	\begin{aligned}		
		\underset{\mathcal{Z},\XC_i,\YC_i,L_i,\FC,\MC,\EC}\min &\quad\sum_{i=1}^3 a_i \Psi_i(\XC_i)+\tau\|\FC\|_{1} \\
		{\rm s.t.} 	&\quad  \YC_i=\ZC,
		\quad  \XC\times_i L_i^T = \YC_i,\\
		&\quad  \FC=D\MC,
		\quad  \MC=\ZC,\\	
		&\quad  \ZC+\EC=\BC,
		\quad P_{\Omega}(\EC)=\mathcal{O},	\\
		&\quad  L_iL_i^T =I_i.
	\end{aligned}
\end{equation}

The augmented Lagrangian function of \eqref{proposed_eq} is
\begin{equation}
	\label{Lgr}
	\begin{aligned}
		& \quad  \mathscr{L}_{\mu}(\ZC,\FC,\MC,\EC,
		\{\XC_i,\YC_i,L_i\}_{i=1}^3) \\
		& = \sum_{i=1}^3 \left( a_i \Psi_i(\XC_i)
		+ \mathds{1}_{L_i L_i^T = I_i} \right)
		+ \tau \|\FC\|_{1} 
		+ \mathds{1}_{P_{\Omega}(\EC)=\mathcal{O}} 	\\
		& \quad + \sum_{i=1}^3 \left(\langle \XC_i \times_i
		L_i^T - \YC_i, \NC_i \rangle 
		+ \langle \YC_i - \ZC, \WC_i \rangle \right) \\
		& \quad + \langle \FC - D\MC, \TC \rangle 
		+ \langle \MC - \ZC, \QC \rangle
		+ \langle \ZC + \EC - \BC, \PC\rangle \\
		& \quad + \frac{\mu}{2} \sum_{i=1}^3 \left(
		\left\| \XC_i \times_i L_i^T -\YC_i \right\|_F^2 
		+ \left\| \YC_i - \ZC \right\|_F^2 \right) \\
		& \quad + \frac{\mu}{2} \left( 
		\left\| \FC - D\MC \right\|_F^2 
		+ \left\| \MC - \ZC \right\|_F^2 
		+ \left\| \ZC + \EC - \BC \right\|_F^2 \right)	,
	\end{aligned}
\end{equation}
where $\mathds{1}$ denotes the indicator function, $\{\NC_i,\WC_i\}_{i=1}^3,\TC,\QC,\PC$ are the Lagrangian
multipliers, and $\mu$ is the penalty parameter.

$\bullet$ \textbf{Updating $\{\YC_i\}_{i=1}^3$ subproblem}
\begin{equation*}
	\label{subY}
	\YC_i^{p+1}=\arg\underset{\YC_i}\min\left\{ \frac{\mu^p}{2}\left(\left\|\XC_i^{p}\times_i ({L_i^p})^T-\YC_i+\frac{\NC_i^p}{\mu^p}\right\|_F^2+\left\|\YC_i-\ZC^p+\frac{\WC_i^p}{\mu^p}\right\|_F^2\right) \right\}.
\end{equation*}
This quadratic objective function is differentiable and convex. 
We can get the minimal solution by deriving the first-order optimal condition as follows:
%, which equals to obtain
%the derivative with respect to $\YC_i$ 
\begin{equation}
	\label{soluY}
	\YC_i^{p+1}=\frac{1}{2}\left(\XC_i^{p}\times_i({L_i^p})^T+\ZC^p+\frac{\NC_i^p-\WC_i^p}{\mu^p} \right).
\end{equation}

$\bullet$ \textbf{Updating $\{\XC_i\}_{i=1}^3$ subproblem}
\begin{align}
	\label{subX}
	\XC_i^{p+1}&=\arg\underset{\XC_i}\min\left\{ 
	a_i \Psi_i(\XC_i)+\frac{\mu^p}{2}\left\|
	\XC_i\times_i ({L_i^p})^T-\YC^{p+1}_i
	+\frac{\NC^p_i}{\mu^p}\right\|_F^2\right\}.
\end{align}
By Lemma 1 in \cite{LZJ22} and the LogDeterminant minimization in \cite{YLL22}, 
$\XC_i$ subproblem can be solved as 
\begin{equation}
	\label{soluX_2}
	\XC_i^{p+1} ={\rm prox}_{{\rm Log},\frac{a_i}{\mu^p}}
	\left(\left(\YC^{p+1}_i-\frac{\NC^p_i}{\mu^p}\right)
	\times_i L_i^p,i \right).
\end{equation}

$\bullet$ \textbf{Updating $\FC$ subproblem}	
\begin{equation*}
	\label{subF}
	\FC^{p+1}=\arg\underset{\FC}\min\left\{\tau\|\FC\|_1+\frac{\mu^p}{2}\left\|\FC-D\MC^p+\frac{\TC^p}{\mu^p}\right\|_F^2\right\}.
\end{equation*}	
By the soft-shrinkage operator, % about $l_1$ norm,
%\cite{DON95}, 
$\FC$ subproblem can be written as
\begin{equation}
	\label{soluF}
	\mathcal{F}^{p+1}=\operatorname{sgn}\left(D\MC^p-\frac{\TC^p}{\mu^p}\right) \circ \max \left\{\left|D\MC^p-\frac{\TC^p}{\mu^p}\right|-\frac{\tau}{\mu^p}, 0\right\},
\end{equation}
where sgn is a point-wise operator, $\operatorname{sgn}(x)$ denotes the sign of $x \neq 0$ and $\operatorname{sgn}(0)=0$, and $\circ$ denotes point-wise product.

$\bullet$ \textbf{Updating $\MC$ subproblem}
\begin{equation*}
	\label{subM}
	\MC^{p+1}=\arg\underset{\MC}\min\left\{\frac{\mu^p}{2}\left(\left\|\FC^{p+1}-D\MC+\frac{\TC^p}{\mu^p}\right\|_F^2+\left\|\MC-\ZC^p+\frac{\QC^p}{\mu^p}\right\|_F^2\right)\right\}.
\end{equation*}
For $\mathcal{M}$ subproblem, it equals to solve the following equation:
\begin{equation}
	\label{subM2}
	\mu^p\left(\mathcal{I}+D^* D\right) \mathcal{M}=D^*\left(\mu^p\FC^{p+1}+\TC^p\right)+\mu^p\ZC^p-\QC^p,
\end{equation}
where $D^*$ indicates the adjoint operator of $D$. By the block circulant structure of the matrix corresponding to the operator $D^* D$, 
%\cite{NG04}, 
it can be diagonalized by the three-dimensional fast Fourier transform (3DFFT) matrix. Then \eqref{subM2} can be solved explicitly by
\begin{equation}
	\label{soluM}
	\mathcal{M}^{p+1}=\operatorname{iFFT} 3\left(\frac{\operatorname{FFT} 3\left( D^*\left(\mu^p\FC^{p+1}+\TC^p\right)+\mu^p\ZC^p-\QC^p \right)}{\mu^p \mathcal{K}+\mu^p\left|\operatorname{FFT} 3\left(D_h\right)\right|^2+\mu^p\left|\operatorname{FFT} 3\left(D_v\right)\right|^2+\mu^p\left|\operatorname{FFT} 3\left(D_t\right)\right|^2}\right),
\end{equation}
where FFT3 and iFFT3 denote the 3DFFT and its inverse transform, respectively. $\mathcal{K}$ denotes a tensor with all entries being 1, and the division and square are performed in a point-wise manner.

$\bullet$ \textbf{Updating $\{{\rm \LC_i}\}_{i=1}^3$ subproblem}
\begin{equation}
	\label{subL}
	\begin{aligned}
		L_i^{p+1}
		& = \arg\underset{L_i} \min \left\{
		\frac{\mu^p}{2} \left\| \XC_i^{p+1} \times_i
		L_i^T - \YC_i^{p+1} + \frac{\NC_i^p}{\mu^p} 
		\right\|_F^2 + \mathds{1}_{L_iL_i^T =I_i}
		\right\} \\
		& = \arg\underset{L_i L_i^T=I_i} \min 
		\left\| L_{i}^{T} \XC_{i(i)}^{p+1} 
		- \left( \YC_i^{p+1} - \frac{\NC_i^p}{\mu^p} 
		\right)_{(i)} \right\|_F^2\\	
		& = \arg\underset{L_i L_i^T=I_i} \min
		{\rm Tr} \left[ \left( L_i^{T} \XC_{i(i)}^{p+1} 
		- \left( \YC_i^{p+1} - \frac{\NC_i^p}{\mu^p} 
		\right)_{(i)} \right)^T \right. \\
		& \hspace{11em} \cdot \left. \left(
		L_i^{T}\XC_{i(i)}^{p+1} - \left( \YC_i^{p+1}
		- \frac{\NC_i^p}{\mu^p} \right)_{(i)} 
		\right) \right]\\
		& = \arg\underset{L_i L_i^T=I_i} \max
		{\rm Tr}
		\left[ \left( \YC_i^{p+1} 
		- \frac{\NC_i^p}{\mu^p} \right)
		{\XC_{i(i)}^{p+1}}^T L_{i} \right].
	\end{aligned}		
\end{equation}
Supposing the matrix SVD of $\left(\YC_i^{p+1}-\frac{\NC_i^p}{\mu^p}\right){\XC_{i(i)}^{p+1}}^T$ is $U_iS_iV_i^T$, we have
\begin{equation*}
	{\rm Tr}(U_iS_iV_i^TL_i)={\rm Tr}(S_iU_iV_i^TL_i).
\end{equation*}
Since $S_i$ is a diagonal matrix, the maximization problem in \eqref{subL} can be maximized when the diagonal elements of $U_iV_i^TL_i$ is positive maximum. By the Cauchy-Schwarz inequality, it can be achieved when $L_i=(U_iV_i^T)^T$, in which case the diagonal elements are all 1. Hence the closed-form solution of \eqref{subL} is
\begin{equation}
	\label{soluL}
	L_i^{p+1}=V_iU_i^T.
\end{equation}

$\bullet$ \textbf{Updating $\ZC$ subproblem}
\begin{equation*}
	\begin{aligned}
	\ZC^{p+1}=\arg\underset{\ZC}\min&\left\{
	\sum_{i=1}^3\left\|\YC^{p+1}_i-\ZC+\frac{\WC^p_i}{\mu^p}\right\|_F^2 
	+\left\|\MC^{p+1}-\ZC+\frac{\QC^p}{\mu^p}\right\|_F^2 \right. \\
	& \hspace{7em}
	\left. +\left\|\ZC+\EC^p-\BC+\frac{\PC^p}{\mu^p}\right\|_F^2\right\}.
	\end{aligned}
\end{equation*}
This quadratic objective function can be slove as follows,
\begin{equation}
	\label{soluZ}
	\ZC^{p+1}=\frac{1}{5}\left( \sum_{i=1}^3\left(\YC_i^{p+1}+\frac{\WC^p_i}{\mu^p}\right)+\MC+\BC-\EC^p+\frac{\QC^p-\PC^p}{\mu^p} \right).
\end{equation}

$\bullet$ \textbf{Updating $\EC$ subproblem}
\begin{equation*}
	\label{subE}
	\EC^{p+1}=\arg\underset{\EC}\min\left\{\mathds{1}_{P_{\Omega}(\EC)=\mathcal{O}}+\frac{\mu^p}{2}\left\|\ZC^{p+1}+\EC-\BC+\frac{\PC^p}{\mu^p}\right\|_F^2\right\}.
\end{equation*}
Similarly, the $\EC$ subproblem can be solved as
\begin{equation}
	\label{soluE}
	\EC^{p+1}=P_{\Omega^C}\left(\BC-\ZC^{p+1}+\frac{\PC^p}{\mu^p}\right)+P_{\Omega}(\mathcal{O}),
\end{equation}
where $\Omega^C$ is the complementary of the index set $\Omega$.

$\bullet$ \textbf{Updating the multipliers}
\begin{equation}
	\label{multiplier}
	\begin{aligned}
		&\NC_i^{p+1}=\NC_i^p+\mu^p(\XC_i^{p+1}\times_i({L_i^{p+1}})^T-\YC_i^{p+1}),i=1,2,3,\\
		&\WC_i^{p+1}=\WC_i^p+\mu^p(\YC^{p+1}_i-\ZC^{p+1}),i=1,2,3,\\
		&\TC^{p+1}=\TC^p+\mu^p(\FC^{p+1}-D\MC^{p+1}),\\
		&\QC^{p+1}=\QC^p+\mu^p(\MC^{p+1}-\ZC^{p+1}),\\
		&\PC^{p+1}=\PC^p+\mu^p(\ZC^{p+1}+\EC^{p+1}-\BC).
	\end{aligned}
\end{equation}

The proposed ADMM for solving \eqref{proposed_eq} is 
summerized in Algorithm \ref{Alg}.

\begin{algorithm}
	\caption{ADMM Framework of NLTFNN Model}
	\label{Alg}
	\KwIn{The observed tensor $\BC$, 
		maximum iteration $\eta$ , stopping criterion $\epsilon$ ,
		$a=[a_1,a_2,a_3], \tau, \mu^0, \mu_{max}$ and $\rho$.} 
	\bf{Initialize:} $ L_i^0=I_i,  \PC^0=\MC^0=\QC^0=\EC^0=\mathcal{O},\ZC^0=\BC,$
	$\XC_i^0=\YC_i^0=\NC_i^0=\WC_i^0=\mathcal{O},$ $ \FC^0=\TC^0=\mathcal{O}$, $p=1$.\\
	\While{\rm not converged and $p\leq \eta$}{
		\For{$i=1:3$}{
			{\rm Update $\YC_i^p$ via} \eqref{soluY}. \\
			{\rm Update $\XC_i^p$ via} \eqref{soluX_2}. \\
			{\rm Update $L_i^p$ via} \eqref{soluL}. \\
		}
		{\rm Update $\FC^p$, $\MC^p$, $\ZC^p$ and $\EC^p$ via \eqref{soluF}, \eqref{soluM}, \eqref{soluZ} and \eqref{soluE}, respectively}. \\
		{\rm Update the multipliers $\{\NC_i^p\},\{\WC_i^p\},\TC^p,\QC^p,\PC^p$ via} \eqref{multiplier}. \\
		{\rm Update} $\mu^p=\min\{\mu_{max},\rho\mu^p\}$. \\
		{\rm  Check the convergence conditions: $\|\ZC^p-\ZC^{p-1}\|_{\infty}<\epsilon$}. \\
		$p=p+1$. \\
	}
	\KwOut{\rm The completed tensor $\ZC$.}
\end{algorithm}

\section{Convergence analysis}

It should be noted that the NLTFNN model is nonconvex and  the convergence analysis of the ADMM for nonconvex problems is limited \cite{CGW17}.
Consequently, the global convergence analysis we proposed requires certain assumptions about the sequences generated by
Algorithm \ref{Alg}.
And the next section presents evidence that the experimental convergence of Algorithm \ref{Alg} is strong.

\begin{theorem}
	Assume that the sequences $\{\{\NC_i^p\}_{i=1}^3,\{\WC_i^p\}_{i=1}^3,\QC^p,\TC^p,\PC^p\}_{p=1}^\infty$ of multipliers are bounded, we have\\
	$(a)$  $\{\{\YC_i^p\}_{i=1}^3,\{\XC_i^p\}_{i=1}^3,\{L_i^p\}_{i=1}^3,\ZC^p,\FC^p,\MC^p,\EC^p\}_{p=1}^\infty$ are bounded.\\
	$(b)$  $\{\{\YC_i^p\}_{i=1}^3,\{\XC_i^p\}_{i=1}^3,\{L_i^p\}_{i=1}^3,\ZC^p,\FC^p,\MC^p,\EC^p\}_{p=1}^\infty$ are Cauchy
	sequences.
	
\end{theorem}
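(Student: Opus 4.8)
The plan is to treat the boundedness of the multipliers as the single external hypothesis and the growth schedule of $\mu^p$ (for which I use $\sum_p 1/\mu^p<\infty$, guaranteed by the geometric rule $\mu^p=\rho\mu^{p-1}$ with $\rho>1$) as the quantitative engine, combining the five multiplier-update identities in \eqref{multiplier} with the closed-form and proximal subproblem solutions. For part $(a)$ I would first note that $L_i^p$ is automatically bounded, since $L_i^p(L_i^p)^T=I_i$ fixes $\|L_i^p\|_F$ and yields the isometry $\|\XC_i\times_i(L_i)^T\|_F=\|\XC_i\|_F$. I would then bound the augmented Lagrangian along the iterates: rewriting, for each constraint, the inner-product-plus-penalty pair via the completing-the-square identity $\langle\Lambda^p,r^{p+1}\rangle+\tfrac{\mu^p}{2}\|r^{p+1}\|_F^2=\tfrac{1}{2\mu^p}\big(\|\Lambda^{p+1}\|_F^2-\|\Lambda^p\|_F^2\big)$ (with $r^{p+1}$ the residual and $\Lambda$ the matching multiplier) collapses the coupling terms into quantities bounded through $\sup_p\|\Lambda\|_F$. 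Together with the block-wise descent of the Gauss--Seidel sweep and the summability of the penalty increments $\tfrac{\mu^p-\mu^{p-1}}{2}\|r^p\|_F^2$ and multiplier increments $\tfrac{1}{\mu^p}\|\Lambda^{p+1}-\Lambda^p\|_F^2$, this shows $\{\mathscr{L}_{\mu^p}\}$ is bounded above. Since $a_i\Psi_i\ge 0$ and $\tau\|\FC\|_1\ge 0$, the bound forces $\Psi_i(\XC_i^p)$, $\|\FC^p\|_1$ and every penalty term to be bounded; the decisive step is then the coercivity of $\Psi_i$, as a bound on $\sum_{j,k}\log(1+\sigma_k^2)$ bounds all singular values of the transformed slices and hence $\|\XC_i^p\|_F$. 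Boundedness of $\XC_i^p$ propagates through the bounded residual $\|\XC_i\times_i(L_i)^T-\YC_i\|_F$ to $\YC_i^p$, through $\|\YC_i-\ZC\|_F$ to $\ZC^p$, and finally to $\MC^p,\EC^p,\FC^p$ via their residual constraints, establishing $(a)$.

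For part $(b)$, the key observation is that each residual equals a multiplier increment divided by $\mu^p$; e.g. $\XC_i^{p+1}\times_i(L_i^{p+1})^T-\YC_i^{p+1}=(\NC_i^{p+1}-\NC_i^p)/\mu^p$, so with bounded multipliers $\|r^{p+1}\|_F\le 2\sup_p\|\Lambda\|_F/\mu^p$, and $\sum_p 1/\mu^p<\infty$ renders every residual and every multiplier increment summable. I would then substitute the explicit updates \eqref{soluY}, \eqref{soluX_2}, \eqref{soluF}, \eqref{soluM}, \eqref{soluL}, \eqref{soluZ}, \eqref{soluE} to express each successive difference as a combination of these summable increments plus a contractive self-term; for instance, eliminating $\YC_i$ from the $\ZC$-update gives $\ZC^{p+1}-\ZC^p=\tfrac12(\ZC^p-\ZC^{p-1})+(\text{summable})$, whose coefficient below $1$ closes a small-gain estimate. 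Summing the resulting inequalities yields $\sum_p\|\theta^{p+1}-\theta^p\|_F<\infty$, which shows that $\{\YC_i^p\},\{\XC_i^p\},\{L_i^p\},\ZC^p,\FC^p,\MC^p,\EC^p$ are Cauchy.

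I expect the main obstacle to be the nonconvexity introduced by $\Psi_i$ and by the orthogonality subproblem for $L_i$: the clean per-block descent and the proximal nonexpansiveness invoked above are immediate only for the convex blocks, so the $\XC_i$-update (solved through the Log-Determinant proximal map of \eqref{soluX_2}) and the $L_i$-update (whose solution $V_iU_i^T$ depends on an SVD that is only locally Lipschitz) must be controlled through the specific properties of these maps rather than by generic arguments. A secondary but essential difficulty is that bounded multipliers alone give only bounded---not vanishing or summable---residuals; the whole argument therefore hinges on $\sum_p 1/\mu^p<\infty$, and the coupling among the seven interleaved block updates must be organized so that the induced linear recursion for $\|\theta^{p+1}-\theta^p\|_F$ has gain strictly below one.
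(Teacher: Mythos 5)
Your proposal is correct and follows essentially the same route as the paper: part $(a)$ via boundedness of the augmented Lagrangian (telescoped multiplier increments, summability $\sum_p 1/\mu^p<\infty$ from the geometric update, block-wise descent of the Gauss--Seidel sweep), then coercivity of $\Psi_i$ to bound $\XC_i^p$ and propagation through the residuals; part $(b)$ via the observation that each residual equals a multiplier increment divided by $\mu^p$, hence is summable. The only substantive difference is that your small-gain recursion in part $(b)$ is unnecessary: as the paper does, combining the first-order condition of the $\YC_i$-subproblem with the multiplier-update identities expresses each successive difference, e.g. $2(\YC_i^{p+1}-\YC_i^p)$, directly as a sum of summable multiplier-increment terms, so no contractive self-term (and no verification that its coefficient is below one) is needed.
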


\begin{proof}
	From  the augmented Lagrangian function \eqref{Lgr} and combining \eqref{multiplier} with simple calculations, it follows that
	\begin{align}
		\label{converge1}
		& \quad \mathscr{L}_{\mu^{p}}
		(\{\YC_i^{p}\}_{i=1}^3, 
		\{\XC_i^{p}\}_{i=1}^3, \{L_i^{p}\}_{i=1}^3, \ZC^{p}, \FC^{p}, \MC^{p}, \EC^{p}, 
		\{\NC_i^{p}\}_{i=1}^3,
		\{\WC_i^{p}\}_{i=1}^3,
		\QC^{p}, \TC^{p}, \PC^{p})  \notag \\
		& = \mathscr{L}_{\mu^{p-1}}
		(\{\YC_i^{p}\}_{i=1}^3, 
		\{\XC_i^{p}\}_{i=1}^3, \{L_i^{p}\}_{i=1}^3,
		\ZC^{p}, \FC^{p}, \MC^{p}, \EC^{p}, 
		\{\NC_i^{p-1}\}_{i=1}^3, 
		\{\WC_i^{p-1}\}_{i=1}^3, \notag  \\
		& \hspace{26.5em} \QC^{p-1}, 
		\TC^{p-1}, \PC^{p-1}) \notag  \\
		& \quad + \sum_{i=1}^3 \left(
		\langle \XC_i^{p} \times_i {L_i^{p}}^T 
		- \YC_i^{p}, \NC_i^{p} - \NC^{p-1}_i \rangle
		+ \frac{\mu^{p} - \mu^{p-1}}{2}
		\left\| \XC_i^{p} \times_i {L_i^{p}}^T 
		- \YC_i^{p} \right\|_F^2 \right. \notag \\
		& \quad + \left. \langle \YC_i^{p} 
		- \ZC^{p}, \WC^{p}_i - \WC_i^{p-1} \rangle
		+ \frac{\mu^{p} - \mu^{p-1}}{2} 
		\left\| \YC_i^{p} - \ZC^{p} \right\|_F^2
		\right)  \notag \\	
		& \quad + \langle \FC^{p} - D\MC^{p}, 
		\TC^{p} - \TC^{p-1} \rangle
		+ \frac{\mu^{p} - \mu^{p-1}}{2}
		\left\| \FC^{p} - D\MC^{p} \right\|_F^2  \notag \\
		& \quad + \langle \MC^{p} - \ZC^{p}, 
		\QC^{p} - \QC^{p-1} \rangle
		+\frac{\mu^{p}-\mu^{p-1}}{2} \left\| 
		\MC^{p} - \ZC^{p} \right\|_F^2 \notag  \\
		& \quad + \langle \ZC^{p} + \EC^{p} - \BC,
		\PC^{p} - \PC^{p-1}\rangle 
		+ \frac{\mu^{p} - \mu^{p-1}}{2} 
		\left\| \ZC^{p} + \EC^{p} 
		- \BC \right\|_F^2  \notag \\
		& = \mathscr{L}_{\mu^{p-1}} 
		(\{\YC_i^{p}\}_{i=1}^3,
		\{\XC_i^{p}\}_{i=1}^3, \{L_i^{p}\}_{i=1}^3,
		\ZC^{p}, \FC^{p}, \MC^{p}, \EC^{p},
		\{\NC_i^{p-1}\}_{i=1}^3,
		\{\WC_i^{p-1}\}_{i=1}^3,  \notag \\
		& \hspace{26.5em} \QC^{p-1}, 
		\TC^{p-1}, \PC^{p-1}) \notag \\
		& \quad + 
		\frac{\mu^p + \mu^{p-1}}{2(\mu^{p-1})^2}
		\sum_{i=1}^3 \left( 
		\|\NC_i^p - \NC_i^{p-1} \|_F^2
		+ \|\WC_i^p - \WC_i^{p-1}\|_F^2 \right) \notag \\
		& \quad + 
		\frac{\mu^p + \mu^{p-1}}{2(\mu^{p-1})^2}
		\left(\|\TC^p - \TC^{p-1}\|_F^2
		+ \|\QC^p - \QC^{p-1}\|_F^2
		+ \|\PC^p - \PC^{p-1}\|_F^2 \right). 
	\end{align}

	Since $\{\YC_i^{p+1}\},\{\XC_i^{p+1}\},\{L_i^{p+1}\},\ZC^{p+1},\FC^{p+1},\MC^{p+1},\EC^{p+1}$ are the minimizers of each subproblem, then we have
	\begin{equation*}
		\begin{aligned}
			& \quad \mathscr{L}_{\mu^{p}}
			(\{\YC_i^{p+1}\}_{i=1}^3,
			\{\XC_i^{p+1}\}_{i=1}^3,
			\{L_i^{p+1}\}_{i=1}^3,
			\ZC^{p+1}, \FC^{p+1}, \MC^{p+1}, \EC^{p+1}, \\
			& \hspace{20em} \{\NC_i^{p}\}_{i=1}^3,
			\{\WC_i^{p}\}_{i=1}^3,
			\QC^{p}, \TC^{p}, \PC^{p})\\
			& \leq \mathscr{L}_{\mu^{p}}
			(\{\YC_i^{p}\}_{i=1}^3,
			\{\XC_i^{p+1}\}_{i=1}^3,
			\{L_i^{p+1}\}_{i=1}^3,
			\ZC^{p+1}, \FC^{p+1}, \MC^{p+1}, \EC^{p+1}, \\
			& \hspace{20em} \{\NC_i^{p}\}_{i=1}^3,
			\{\WC_i^{p}\}_{i=1}^3,
			\QC^{p}, \TC^{p}, \PC^{p})\\
			& \leq \mathscr{L}_{\mu^{p}}
			(\{\YC_i^{p}\}_{i=1}^3,
			\{\XC_i^{p}\}_{i=1}^3,
			\{L_i^{p+1}\}_{i=1}^3,
			\ZC^{p+1}, \FC^{p+1}, \MC^{p+1}, \EC^{p+1}, \\
			& \hspace{20em} \{\NC_i^{p}\}_{i=1}^3,
			\{\WC_i^{p}\}_{i=1}^3,
			\QC^{p}, \TC^{p}, \PC^{p}) \\
			& \leq \cdots\\
			& \leq \mathscr{L}_{\mu^{p}}
			(\{\YC_i^{p}\}_{i=1}^3,
			\{\XC_i^{p}\}_{i=1}^3,
			\{L_i^{p}\}_{i=1}^3,
			\ZC^{p}, \FC^{p}, \MC^{p}, \EC^{p}, \\
			& \hspace{20em} \{\NC_i^{p}\}_{i=1}^3,
			\{\WC_i^{p}\}_{i=1}^3,
			\QC^{p}, \TC^{p}, \PC^{p}) .\\
		\end{aligned}
	\end{equation*}
	
	Combining the above inequality with \eqref{converge1} and then iterating the inequality chain for $p$ times, we obtain
	\begin{align}
		\label{converge2}
		& \quad \mathscr{L}_{\mu^{p}}
		(\{\YC_i^{p+1}\}_{i=1}^3,
		\{\XC_i^{p+1}\}_{i=1}^3,
		\{L_i^{p+1}\}_{i=1}^3,
		\ZC^{p+1}, \FC^{p+1}, \MC^{p+1}, \EC^{p+1}, \notag\\
		& \hspace{20em} \{\NC_i^{p}\}_{i=1}^3,
		\{\WC_i^{p}\}_{i=1}^3,
		\QC^{p}, \TC^{p}, \PC^{p}) \notag\\
		& \leq \mathscr{L}_{\mu^{0}}
		(\{\YC_i^{1}\}_{i=1}^3,
		\{\XC_i^{1}\}_{i=1}^3,
		\{L_i^{1}\}_{i=1}^3,
		\ZC^{1}, \FC^{1}, \MC^{1}, \EC^{1}, \notag\\
		& \hspace{20em}  \{\NC_i^{0}\}_{i=1}^3,
		\{\WC_i^{0}\}_{i=1}^3,
		\QC^{0}, \TC^{0}, \PC^{0}) \notag\\
		& \quad + \sum_{n=1}^{p} 
		\frac{\mu^n + \mu^{n-1}}{2(\mu^{n-1})^2}
		\sum_{i=1}^3 \left(\|\NC_i^n - \NC_i^{n-1}\|_F^2
		+ \|\WC_i^n - \WC_i^{n-1}\|_F^2 \right) \notag\\
		& \quad + \sum_{n=1}^{p}
		\frac{\mu^n + \mu^{n-1}}{2(\mu^{n-1})^2}
		\left(\|\TC^n - \TC^{n-1}\|_F^2
		+ \|\QC^n - \QC^{n-1}\|_F^2
		+ \|\PC^n - \PC^{n-1}\|_F^2 \right).		
	\end{align}
	According to the boundedness of the multiplier sequences, for any $n$, we set $c$ is the upper bound of
	\begin{equation*}
		\{\|\NC_i^n-\NC_i^{n-1}\|_F^2,\|\WC_i^n-\WC_i^{n-1}\|_F^2,\|\TC^n-\TC^{n-1}\|_F^2,\|\QC^n-\QC^{n-1}\|_F^2,\|\PC^n-\PC^{n-1}\|_F^2\}.
	\end{equation*}
	Clearly, $c$ is bounded. 
	
	Next, because $\mu^{n+1}=\rho\mu^n$, where $\rho>1$, we arrive at
	\begin{equation}
		\label{converge3}
		\sum_{n=1}^{\infty}
		\frac{\mu^n + \mu^{n-1}}{2(\mu^{n-1})^2}
		= \sum_{n=1}^{\infty} \frac{1 + \rho}{2\mu^{n-1}}
		= \frac{\rho(1 + \rho)}{2\mu^0(\rho-1)} < \infty.
	\end{equation}
	Thus we get that the augmented Lagrangian function on the left side of \eqref{converge2} is bounded. By adding several terms, we get
	\begin{equation*}
		\begin{aligned}
			& \quad \mathscr{L}_{\mu^{p}} (\{\YC_i^{p+1}\}_{i=1}^3,
			\{\XC_i^{p+1}\}_{i=1}^3,
			\{L_i^{p+1}\}_{i=1}^3,
			\ZC^{p+1}, \FC^{p+1}, \MC^{p+1}, \EC^{p+1},
			\{\NC_i^{p}\}_{i=1}^3,\\
			& \hspace{23em} \{\WC_i^{p}\}_{i=1}^3,
			\QC^{p}, \TC^{p}, \PC^{p}) \\
			& \quad + \frac{1}{2\mu^p} \left( \sum_{i=1}^3
			\left(\|\NC_i^p\|_F^2 + \|\WC_i^p\|_F^2 \right)
			+ \|\TC_i^p\|_F^2 +\| \QC_i^p\|_F^2 
			+ \|\PC_i^p\|_F^2 \right) \\
			& = \sum_{i=1}^3 a_i\Psi_i(\XC_i^{p+1})
			+ \tau \|\FC^{p+1}\|_{1} 
			+ \mathds{1}_{P_{\Omega} (\EC^{p+1})=\mathcal{O}}
			+ \mathds{1}_{L^{p+1}_i{L_i^{p+1}}^T = I_i} \\
			& \quad + \frac{\mu^p}{2} \sum_{i=1}^3
			\left( \left\| \XC^{p+1}_i \times_i{L^{p+1}_i}^T
			- \YC^{p+1}_i + \frac{\NC^p_i}{\mu^p} \right\|_F^2 + \left\|\YC_i^{p+1} - \ZC^{p+1}
			+ \frac{\WC^p_i}{\mu^p} \right\|_F^2 \right)\\
			&  \quad + \frac{\mu^p}{2} \left( 
			\left\| \FC^{p+1} - D\MC^{p+1} 
			+ \frac{\TC^{p+1}}{\mu^p} \right\|_F^2
			+ \left\|\MC^{p+1} - \ZC^{p+1}
			+ \frac{\QC^p}{\mu^p} \right\|_F^2  \right.\\
			& \hspace{16em} \left.
			+ \left\|\ZC^{p+1} + \EC^{p+1} - \BC
			+ \frac{\PC^p}{\mu^p} \right\|_F^2 \right).
		\end{aligned}
	\end{equation*} 
	Note that the left side is bounded. 
	Thus, every term on the right side is bounded. 
	Briefly, we denote the bounded term $\Psi_i(\XC_i^{p+1})$ as 
	$\Psi_i(\XC),i=1,2,3$. 
	%	When $G_i(\XC) = \|\XC\|_{TNN_i}$ in LTFN-TV model,
	%	there exists a constant $d_1$ such that
	%	\begin{equation*}
		%		\sum_{j=1}^{n_i}
		%		\left\|(\hat{X}_i)_i^{(j)}\right\|_* 
		%		=\sum_{k=1}^{m_i}\sum_{j=1}^{n_i}
		%		\sigma_k\left(\left(
		%		\hat{\XC}_i\right)_i^{(j)}  \right)
		%		\leq d_1,
		%	\end{equation*}
	%	then we obtain
	%	\begin{equation*}
		%		\|\XC\|_F^2\leq m_i d_1^2.
		%	\end{equation*}
	%	When $G_i(\XC) = \Psi_i(\XC)$ in LogTFN-TV model,
	From the definition of mode-$i$ 
	LogTNN, we note that there exists a constant $d$ 
	such that the following inequality
	\begin{equation*}
		0 \leq {\rm log}\left( 
		1+\sigma_k^2\left(
		\left(\hat{\XC}_i\right)_i^{(j)}
		\right)\right) \leq d
	\end{equation*}
	holds for $1\leq k\leq m_i, 1 \leq j\leq  n_i$. 
	Then we can deduce $\sigma_k^2\left(
	\left(\hat{\XC}_i\right)_i^{(j)}
	\right)\leq e^{d}-1$ 
	and get
	\begin{equation*}
		\|\XC\|_F^2\leq m_i(e^{d}-1).
	\end{equation*}
	Therefore, for $i=1,2,3, \{\XC_i^{p}\} $ is bounded. 
	
	Moreover, $L_i^{p}$ is an unitary matrix, thus 
	$\{\XC_i^{p}\times_i{L_i^{p}}^T \}$ and $\{L_i\}$ are also bounded.	
	
	Since
	\begin{equation*}
		\YC_i^{p+1}=\XC_i^{p+1}\times_i{L_i^{p+1}}^T-\frac{\NC_i^k-\NC_i^{p+1}}{\mu^p}, 
	\end{equation*}
	$\{\YC_i^p\}$ is bounded. From other equations in \eqref{multiplier}, we obtain the boundedness of $\{\ZC^p\}$,$\{\FC^p\}$,$\{\MC^p\}$,$\{\EC^p\}$ similarly. This completes the proof of (a).
	
	Using the first-order optimal condition for $\{\YC_i^{p+1}\}$ in $\eqref{subY}$, we have
	\begin{equation*}
		2\YC_i^{p+1}-2\YC_i^p=\XC_i^{p}\times_i{L_i^{p}}^T-\YC_i^p+\ZC^p-\YC_i^p+\frac{\NC_i^p-\WC_i^p}{\mu^p}.
	\end{equation*}
	Combining \eqref{multiplier}, we obtain
	\begin{equation*}
		2(\YC_i^{p+1}-\YC_i^p)=\frac{\NC_i^{p}-\NC_i^{p-1}}{\mu^{p-1}}+\frac{\WC_i^{p-1}-\WC_i^p}{\mu^{p-1}}+\frac{\NC_i^p-\WC_i^p}{\mu^p}.
	\end{equation*}
	Therefore $\{\YC_i^{p}\}$ is a Cauchy sequence.
	
	Similarly, based on \eqref{multiplier} and the boundedness of multipliers' sequences, we can proof that $\{\XC_i^p\}$,$\{L_i^p\}$,$\{\ZC^p\}$,$\{\FC^p\}$,$\{\MC^p\}$,$\{\EC^p\}$ are Cauchy sequences.
\end{proof}

\begin{theorem}
	Let $\{S^p\}_{p=1}^{\infty} =
	\{\{\YC_i^p\}_{i=1}^3, \{\XC_i^p\}_{i=1}^3,
	\{L_i^p\}_{i=1}^3, \ZC^p, \FC^p, \MC^p, \EC^p,
	\{\NC_i^p\}_{i=1}^3, \linebreak
	\{\WC_i^p\}_{i=1}^3,$  $\QC^p, \TC^p, 
	\PC^p\}_{p=1}^{\infty}$ is generated from  algorithm \ref{Alg}. If the multipliers sequences are bounded, then $\{S^p\}_{p=1}^{\infty}$ converges to the limit point $S^*$, which satisfies the following KKT conditions:	
	\begin{align}
		&\YC_i^*=\ZC^*, \XC^*_i\times_i {L_i^*}^T=\YC_i^*,\FC^*=D\MC^*,\label{eq1}\\
		&\MC^*=\ZC^*,\ZC^*+\EC^*=\BC,P_{\Omega}(\EC^*)=\mathcal{O},
		L_i^*{L_i^*}^T =I_i,\label{eq2}\\
		&\mathcal{O}\in a_i\partial \Psi_i(\XC_i^*)+
		\NC_i^*\times_iL_i^*,\label{eq3}\\
		&\mathcal{O}\in\tau\partial\|\FC^*\|_1+\TC^*,\label{eq4}	\\
		&\mathcal{O}\in\partial(\mathds{1}_{P_{\Omega}(\EC^*)=\mathcal{O}})+\PC^*\label{eq5},\\
		&\mathcal{O}\in\partial(\mathds{1}_{L_i^*{L_i^*}^T=I_i})+\NC_{i(i)}^*{\XC_{i(i)}^*}^T.\label{eq6}
	\end{align}
\end{theorem}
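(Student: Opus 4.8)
The plan is to feed the two facts already established --- that the primal iterates are Cauchy (part $(b)$ of the preceding theorem) and that the multipliers are bounded by hypothesis --- into a single limit point, and then to drive every subproblem's first-order optimality condition together with every update in \eqref{multiplier} to that limit. In finite dimensions the Cauchy primal blocks $\{\YC_i^p\},\{\XC_i^p\},\{L_i^p\},\ZC^p,\FC^p,\MC^p,\EC^p$ converge, while the bounded multiplier blocks admit a convergent subsequence by Bolzano--Weierstrass; passing to that subsequence produces one limit point $S^*$. Throughout I would use $\mu^p=\mu^0\rho^p\to\infty$, so that any quantity of the form (bounded)$/\mu^p$ vanishes.

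First I would settle the primal-feasibility part \eqref{eq1}--\eqref{eq2}. Solving each of the five rules in \eqref{multiplier} for its constraint residual gives, e.g., $\XC_i^{p+1}\times_i(L_i^{p+1})^T-\YC_i^{p+1}=(\NC_i^{p+1}-\NC_i^p)/\mu^p$, and similarly for $\YC_i-\ZC$, $\FC-D\MC$, $\MC-\ZC$, and $\ZC+\EC-\BC$. Boundedness of the multipliers makes every right-hand side $O(1/\mu^p)\to\mathcal{O}$, so each residual tends to zero and the limit yields $\YC_i^*=\ZC^*$, $\XC_i^*\times_i(L_i^*)^T=\YC_i^*$, $\FC^*=D\MC^*$, $\MC^*=\ZC^*$, and $\ZC^*+\EC^*=\BC$. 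The two remaining constraints hold for every $p$ by construction --- $P_\Omega(\EC^p)=\mathcal{O}$ from \eqref{soluE} and $L_i^p(L_i^p)^T=I_i$ from \eqref{soluL} --- and survive the limit because both feasible sets are closed.

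Next I would read the stationarity inclusions \eqref{eq3}--\eqref{eq6} off the first-order conditions of the respective subproblems, each time rewriting the penalty gradient through the matching multiplier update. For $\EC$ the optimality condition is $\mathcal{O}\in\partial\mathds{1}_{P_\Omega(\EC^{p+1})=\mathcal{O}}+\mu^p(\ZC^{p+1}+\EC^{p+1}-\BC)+\PC^p$; since the $\PC$-update in \eqref{multiplier} uses exactly $\ZC^{p+1}$ and $\EC^{p+1}$, the last two terms collapse to $\PC^{p+1}$, and closedness (outer semicontinuity) of the normal cone gives \eqref{eq5}. The $L_i$-subproblem is analogous: unfolding along mode $i$, its stationarity couples $N_{\{L_iL_i^T=I_i\}}(L_i^{p+1})$ with the gradient of the quadratic penalty, and substituting the $\NC_i$-update collapses that gradient to the term $\NC_{i(i)}^{p+1}(\XC_{i(i)}^{p+1})^T$ of \eqref{eq6}. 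For $\XC_i$ I would use the proximal characterization behind \eqref{soluX_2} to write $\mathcal{O}\in a_i\partial\Psi_i(\XC_i^{p+1})+[\mu^p(\XC_i^{p+1}\times_i(L_i^p)^T-\YC_i^{p+1})+\NC_i^p]\times_iL_i^p$, and for $\FC$ the soft-thresholding \eqref{soluF} gives $\mathcal{O}\in\tau\partial\|\FC^{p+1}\|_1+\mu^p(\FC^{p+1}-D\MC^p)+\TC^p$.

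The crux sits in these last two inclusions. Because the blocks are updated in Gauss--Seidel fashion, the $\XC_i$-subproblem is solved with the stale transform $L_i^p$ whereas $\NC_i^{p+1}$ is built from $L_i^{p+1}$, and the $\FC$-subproblem uses the stale $D\MC^p$ whereas $\TC^{p+1}$ uses $D\MC^{p+1}$; reconciling the two leaves the cross terms $\mu^p\,\XC_i^{p+1}\times_i((L_i^p-L_i^{p+1})^T)$ and $\mu^p D(\MC^{p+1}-\MC^p)$. Although the increments $L_i^p-L_i^{p+1}$ and $\MC^{p+1}-\MC^p$ vanish, the bound available from the Cauchy estimate only controls them at order $O(1/\mu^{p-1})$, so these products are a priori merely bounded rather than null. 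Proving that they actually tend to $\mathcal{O}$ --- so that the penalty terms converge to $\NC_i^*\times_iL_i^*$ and $\TC^*$ and deliver \eqref{eq3}--\eqref{eq4} with the correct multipliers --- is the hard part, and it demands a successive-iterate estimate of the type $\mu^p\|x^{p+1}-x^p\|\to0$ that is strictly stronger than the Cauchy property alone. A companion subtlety is that $\Psi_i$ is nonconvex, so passing $\mathcal{O}\in a_i\partial\Psi_i(\XC_i^{p+1})+\cdots$ to the limit is legitimate only via outer semicontinuity of the limiting subdifferential of the log-determinant surrogate, which I would justify from the continuity of $\Psi_i$ and the convergence $\XC_i^p\to\XC_i^*$.
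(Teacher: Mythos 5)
Your proposal follows the same route as the paper's own proof: the feasibility conditions \eqref{eq1}--\eqref{eq2} are obtained by solving each rule in \eqref{multiplier} for its constraint residual, which is $O(1/\mu^p)$ by boundedness of the multipliers, and the stationarity inclusions \eqref{eq3}--\eqref{eq6} are read off the subproblems' first-order conditions and passed to the limit. The steps you actually complete are correct, and your handling of \eqref{eq5} and \eqref{eq6} is tighter than the paper's: because the $\EC$- and $L_i$-subproblems are solved with no stale blocks relative to the multiplier updates, you can collapse the penalty gradient plus old multiplier exactly into $\PC^{p+1}$ and $\NC_{i(i)}^{p+1}{\XC_{i(i)}^{p+1}}^T$, whereas the paper keeps the terms separate and asserts that the limit of $\mu^p\left({(L_{i}^{p+1})}^{T}\XC_{i(i)}^{p+1}-\YC_{i(i)}^{p+1}\right){\XC_{i(i)}^{p+1}}^T$ is $\mathcal{O}$ --- a claim equivalent to $\NC_i^{p+1}-\NC_i^p\to\mathcal{O}$, which boundedness of the multipliers alone does not deliver.

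The ``hard part'' you isolate is a genuine gap, and you should know that the paper does not close it either: for \eqref{eq3} the paper writes the inclusion from \eqref{subX} with the stale transform $L_i^p$ and immediately declares \eqref{eq3} proved, and for \eqref{eq4} it says only ``similarly.'' The cross terms $\mu^p\,\XC_i^{p+1}\times_i(L_i^p-L_i^{p+1})^T$ and $\mu^p D(\MC^{p+1}-\MC^p)$ you exhibit are exactly what is missing; the Cauchy property from the preceding theorem controls successive increments only at order $O(1/\mu^{p-1})$, so these products are merely bounded, and the stronger estimate $\mu^p\|x^{p+1}-x^p\|\to 0$ is established nowhere. Two further points that you (briefly) and the paper (silently) gloss over belong in a complete write-up: since the multipliers are only bounded, one must extract a single subsequence along which all dual blocks at indices $p$ and $p+1$ converge to the \emph{same} limits before $S^*$ is well defined; and the whole argument requires $\mu^p\to\infty$, which ignores the cap $\mu_{\max}$ in Algorithm \ref{Alg} (the paper's first convergence theorem makes the same tacit assumption). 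Finally, your appeal to outer semicontinuity of the limiting subdifferential of the nonconvex $\Psi_i$ is the correct justification for passing \eqref{eq3} to the limit --- an issue the paper does not mention at all.
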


\begin{proof}
	From \eqref{multiplier} and the boundedness of $\|\NC_i^p-\NC_i^{p-1}\|_F^2$, we have
	\begin{equation*}
		\lim_{p\to\infty}\left(\XC_i^p-\YC_i^p\times_i L_i^p\right)=\lim_{p\to\infty}\frac{\NC_i^{p-1}-\NC_i^p}{\mu^{p-1}}=\mathcal{O}.
	\end{equation*}
	Then, $\XC^*_i\times_i {L_i^*}^T=\YC_i^*$. Similarly, we acquire $\YC_i^*=\ZC^*, \FC^*=D\MC^*$, $\MC^*=\ZC^*, \ZC^*+\EC^*=\BC$.
	Since the solution of $\EC^{p+1},$ we know that $P_{\Omega}(\EC^*)=\mathcal{O}.$ Besides, from \eqref{soluL}, we get that $L_i^*$ is unitary. Now we complete the proof of \eqref{eq1} and \eqref{eq2}.
	
	From \eqref{subX}, we can deduce
	\begin{equation*}
		\mathcal{O}\in a_i\partial \Psi_i(\XC_i^{p+1})+
		\mu^p(\XC_i^{p+1}-\YC_i^{p+1}\times_i L_i^p)+\NC_i^p\times_i L_i^p.
	\end{equation*}
	Then we proof \eqref{eq3}.
	
	Similaily, from \eqref{subF} and \eqref{subE}, we acquire \eqref{eq4} and \eqref{eq5} respectively.
	
	From \eqref{subL}, we get
	\begin{equation*}
		\begin{aligned}
			\mathcal{O}\in&\partial(\mathds{1}_{L_i^{p+1}{(L_i^{p+1})}^T =I_i})+\partial\left(\frac{\mu^p}{2}\left\|\XC_i^{p+1}\times_i {(L_i^{p+1})}^T-\YC_i^{p+1}+\frac{\NC_i^p}{\mu^p}\right\|_F^2\right)\\
			=&\partial(\mathds{1}_{L_i^{p+1}{(L_i^{p+1})}^T =I_i})+\partial\left(\frac{\mu^p}{2}\left\|{(L_{i}^{p+1})}^{T}\XC_{i(i)}^{p+1}-\YC_{i(i)}^{p+1}+\frac{\NC_{i(i)}^p}{\mu^p}
			\right\|_F^2\right)\\
			=&\partial(\mathds{1}_{L_i^{p+1}{(L_i^{p+1})}^T =I_i})+\mu^p\left({(L_{i}^{p+1})}^{T}\XC_{i(i)}^{p+1}-\YC_{i(i)}^{p+1}+\frac{\NC_{i(i)}^p}{\mu^p}\right){\XC_{i(i)}^{p+1}}^T\\
			=&\partial(\mathds{1}_{L_i^{p+1}{(L_i^{p+1})}^T =I_i})+\mu^p\left({(L_{i}^{p+1})}^{T}\XC_{i(i)}^{p+1}-\YC_{i(i)}^{p+1}\right){\XC_{i(i)}^{p+1}}^T+\NC_{i(i)}^p	{\XC_{i(i)}^{p+1}}^T.\\
		\end{aligned}
	\end{equation*}
	Since the limit of the second term is $\mathcal{O}$, \eqref{eq6} can be proved.
\end{proof}

\section{Numerical Experiments}

In the proposed NLTFNN model, we employ the Log-Determinant function in the nonconvex surrogate.
In order to compare the difference between convex and nonconvex functions, we replace the TFNNLog in the NLTFNN model with  TFNN, thereby obtaining a learnable transformed fibered nuclear norm (LTFNN) method.
The LTFNN model will be utilized for comparison with the NLTFNN model in the experiments.
Apart from that, we also compared the proposed NLTFNN with six other algorithms: the classic TNN method \cite{ZEA14}, 
the TNN method based on DCT (TNNDCT) \cite{LPW19}, 
the transformed TNN with TV regularization (TNTV) \cite{QBN21}, 
the convex approximation of tensor fibered rank (3DTNN) \cite{ZHZ20}, 
the nonconvex Log-Determinant-based 3DTNN (3DLogTNN) \cite{ZHZ20}, and
the self-adaptive learnable transforms with Schattern-$p$ norm (SALTS) \cite{WGF22}.

In the existing methods, the parameters are selected based on the original papers. % or empirically determined in order to achieve the optimal performance. 
In the NLTFNN and LTFNN models, we set 
the parameters empirically, i.e., 
the penalty parameter 
$\mu^0 = 10^{-4}$ and its maximum $\mu_{\textit{max}} = 10$, 
the regularization parameter $\tau$ is set between 
$[10^{-3}, 10^{-8}],$
$\rho$ is selected between $(1,1.2]$, 
the stopping criterion $\epsilon=10^{-8}$,
the maximum iteration $\eta=500$,
and  $a=(a_1,a_2,a_3)$ is set to be $\frac{(1,1,0.001)}{2.001}$ or $\frac{(1,1,1)}{3}$.

To confirm the effectiveness of the proposed NLTFNN method in the tensor completion task, we consider different types of datasets, including magnetic resonance image (MRI), multispectral images (MSI), hyperspectral images (HSI) and videos.
For each dataset, we consider a 3-order tensor with randomly sampled entries, 
and the sampling rate (SR) is set to be 10\%, 20\%, 30\%, 40\% and 50\%.

In addition to the intuitive visualization, three quantitative image quality metrics provide a numerical evaluation of the performance of all the algorithms, including peak signal-to-noise ratio (PSNR), structural similarity (SSIM) and relative square error (RSE).
The higher the values of PSNR and SSIM, the better recovered data performance. 
Conversely, a smaller value of RSE indicates superior completion performance.
%PSNR is defined as
%\begin{equation*}
%	{\rm PSNR} = 10 {\rm log}_{10}
%	\frac{\ZC_{max}^2}{\|\tilde{\ZC} - \ZC\|_F^2}
%\end{equation*}
%where中 $\ZC$ is the original tensor, 
%$\tilde{\ZC}$ is the recovered tensor,
%and $\ZC_{max}$ is the maximum pixel value of $\ZC$.
%SSIM is defined as
%\begin{equation*}
%	{\rm SSIM} = \frac{(2 \mu_{Z} \mu_{\tilde{Z}} + c_1)
	%			(2 \sigma_{Z\tilde{Z}} + \c_2)}
%	{(\mu_{Z}^2 + \mu_{\tilde{Z}}^2 + c_1)
	%			(\sigma_{Z}^2 \sigma_{\tilde{Z}}^2 + c_2)}
%\end{equation*}
%where $\mu_{Z}, \mu_{\tilde{Z}}$ represent the average values of the original matrix $Z$ and the recovered matrix $\tilde{Z}$, respectively, 
%$\sigma_{Z\tilde{Z}}$ represents the covariance of $\mu_{Z}$ and $\mu_{\tilde{Z}}$, 
%$\sigma_{Z}$ and $\sigma_{\tilde{Z}}$ represent the standard deviation of $\mu_{Z}$ and $\mu_{\tilde{Z}}$, respectively, 
%$c_1, c_2$ are the constants to stabilize the division with weak denominator. 

%We also take the relative square error (RSE) to quantify the quality of the tensor completion, which is defined as
%\begin{equation*}
%	{\rm RSE} = \frac{\|\ZC-\tilde{\ZC}\|_F}{\|\ZC\|_F}
%\end{equation*}

\subsection{MRI Completion}

In this subsection, we perform experiments on an MRI\footnote{http://brainweb.bic.mni.mcgill.ca/brainweb/selection$\_$normal.html} data of size $181 \times 217 \times 40$.
As we can seen in Table \ref{MRI_t}, NLTFNN method gives  the best numerical results in all the cases. 
%As the sampling rate increases, the better performance of the NLTFNN becomes more apparent.
From the enlarged part of Fig. \ref{MRI_result}, we can see that the image recovered by NLTFNN is smoother and clearer than those recovered by other methods.
A comparison of the results produced by the LTFNN and NLTFNN models reveals that NLTFNN reconstructs more details and visually better, which shows the advantage of nonconvex function.
Fig. \ref{MRI_slice} displays the PSNR and SSIM values with respect to the number of slices. It is evident that NLTFNN achieves the highest PSNR and SSIM values for each slice.

\begin{table}[t]
	\centering
	\caption{Recovered results of MRI by different methods with different SR.}
	\label{MRI_t}
	\scalebox{0.75}{
		\begin{tabular}{c c c c c c c c c c c c}
			\hline
			\multirow{2}{*}{SR} & \multirow{2}{*}{metric} &
			\multirow{2}{*}{Sample} & \multirow{2}{*}{TNN} &
			TNN & \multirow{2}{*}{TNTV} & 
			\multirow{2}{*}{3DTNN} & 3DLog & \multirow{2}{*}{SALTS} & LTFNN &  NLTFNN \\
			\multirow{2}{*}{} & \multirow{2}{*}{} & 
			\multirow{2}{*}{} & \multirow{2}{*}{} & 
			DCT & \multirow{2}{*}{} & 
			\multirow{2}{*}{} &  TNN & \multirow{2}{*}{} 
			& (Ours) & (Ours) \\
			\hline
			\multirow{3}{*}{10\% } & PSNR & 9.8857 & 24.1926 & 25.1658	& 26.4000 & 26.3164	& 26.7243 & 29.9008  &
			26.4264	& $\bm{30.0301}$\\
			\multirow{3}{*}{    } & SSIM & 0.2198 & 0.6706	&  0.7149	& 0.8366 &  0.7728	&  0.8167	& 0.8722 &
			0.8414	 & $\bm{0.9218}$\\
			\multirow{3}{*}{    } & RSE	 & 0.9485 & 0.1827	&  0.1633 & 0.1417	&  0.1431 & 0.1365	& 0.0958 &
			0.1413	&  $\bm{0.0933}$\\
			\hline
			\multirow{3}{*}{20\% } & PSNR & 10.3945	& 27.6075 &	 28.9064	& 30.3418 & 30.1375	& 32.7083 &  33.3647
			& 30.7049 &$\bm{34.3312}$ \\
			\multirow{3}{*}{    } & SSIM & 0.2564 & 0.8114	&  0.8551	&  0.9275	&  0.8983	&  0.9415 & 0.9318
			&  0.9337	&  $\bm{0.9614}$\\
			\multirow{3}{*}{    } & RSE	 & 0.8946 & 0.1233	& 0.1062 & 0.0900	&  0.0921 & 0.0685	 & 0.0643 &
			0.0863 & $\bm{0.0569}$ \\
			\hline
			\multirow{3}{*}{30\%} & PSNR & 10.9708 & 30.3649 & 31.8121	& 33.1654	& 32.6371	& 36.0667 &	35.8792  & 33.7149	& $\bm{36.8226}$\\
			\multirow{3}{*}{    } & SSIM & 0.2978 &  0.8869 &	  0.9201	&  0.9610	& 0.9424 & 0.9723	& 0.9584   &
			0.9661	&  $\bm{0.9756}$ \\
			\multirow{3}{*}{    } & RSE	 & 0.8371  & 0.0898	 & 0.0760	&  0.0650	&  0.0691	&  0.0466 & 0.0481 & 0.0610	 &  $\bm{0.0427}$  \\
			\hline
			\multirow{3}{*}{40\%} & PSNR & 11.6487 & 33.0028 & 34.4687	& 35.7129 & 34.7096	& 38.7785	& 38.0136  &
			36.3380	&  $\bm{39.5103}$\\
			\multirow{3}{*}{    } & SSIM & 0.3419 &  0.9331	 & 0.9547 & 0.9777	 &  0.9636	&  0.9849	& 0.9724   &
			0.9811	& 	$\bm{0.9874}$\\
			\multirow{3}{*}{    } & RSE	 &  0.7743 & 0.0663	&  0.0560 & 0.0485	& 0.0544 & 0.0341 & 0.0376 &
			0.0451	&  $\bm{0.0313}$  \\
			\hline
			\multirow{3}{*}{50\%} & PSNR & 12.4351	& 35.5250	& 36.9338	& 38.0220	& 36.4054 & 41.1048	&  39.8765 &
			38.6965	&  $\bm{41.6788}$ \\
			\multirow{3}{*}{    } & SSIM & 0.3879 &  0.9608	&  0.9736 & 0.9866	& 0.9750 & 0.9911	& 0.9805 &
			0.9889	& $\bm{0.9926}$ \\
			\multirow{3}{*}{    } & RSE	 & 0.7073 &  0.0496	&  0.0421 & 0.0372	 & 0.0448 & 0.0261 & 0.0303 &
			0.0344	&  $\bm{0.0244}$ \\
			\hline
		\end{tabular}
	}
\end{table}

\begin{figure}[htbp] %h：浮动；t：顶部；b:底部；p：当前位置
	\centering		
	\setcounter{subfigure}{0}
	\subfloat[Original]{
		\includegraphics[width=0.18\linewidth]{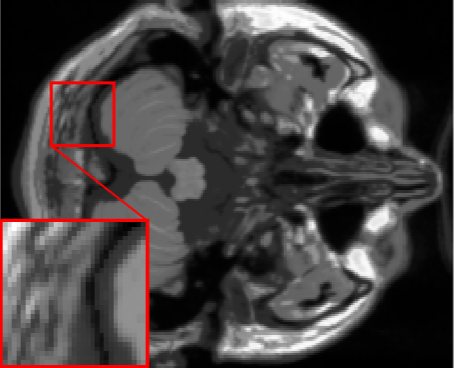}
	}
	\subfloat[Sample]{
		\includegraphics[width=0.18\linewidth]{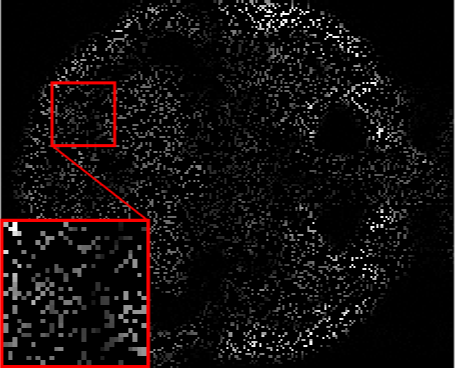}
	}
	\subfloat[TNN]{
		\includegraphics[width=0.18\linewidth]{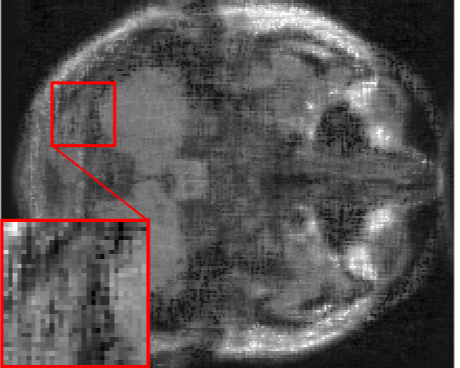}
	}
	\subfloat[TNNDCT]{
		\includegraphics[width=0.18\linewidth]{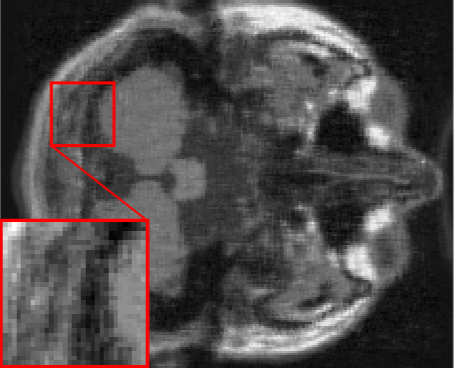}
	}
	\subfloat[TNTV]{
		\includegraphics[width=0.18\linewidth]{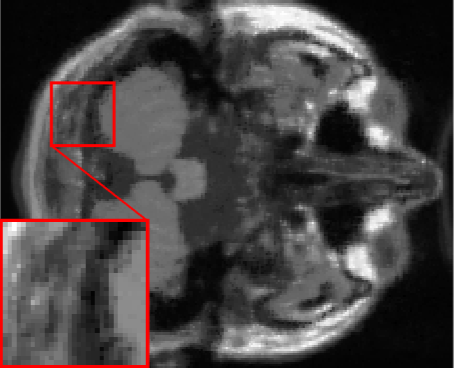}
	}
	\\
	\setcounter{subfigure}{5}
	\subfloat[3DTNN]{
		\includegraphics[width=0.18\linewidth]{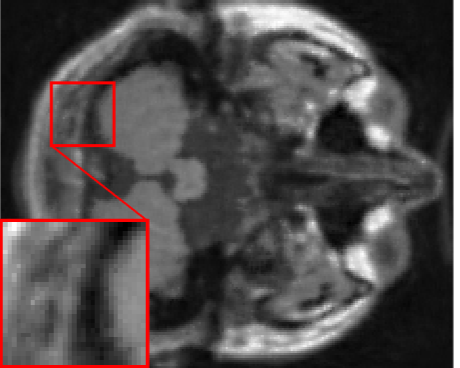}
	}
	\subfloat[3DLogTNN]{
		\includegraphics[width=0.18\linewidth]{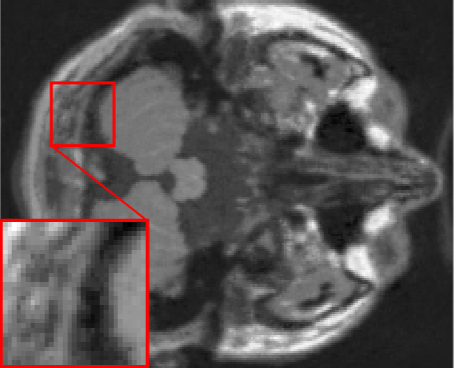}
	}
	\subfloat[SALTS]{
		\includegraphics[width=0.18\linewidth]{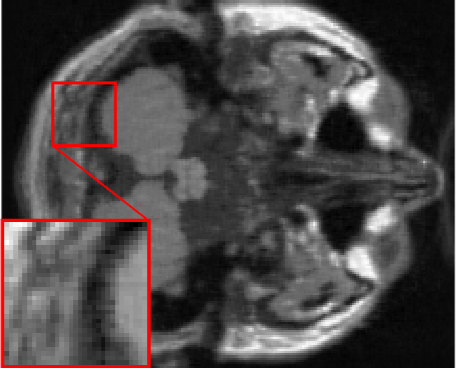}
	}
	\subfloat[LTFNN$_{\rm (Ours)}$]{
		\includegraphics[width=0.18\linewidth]{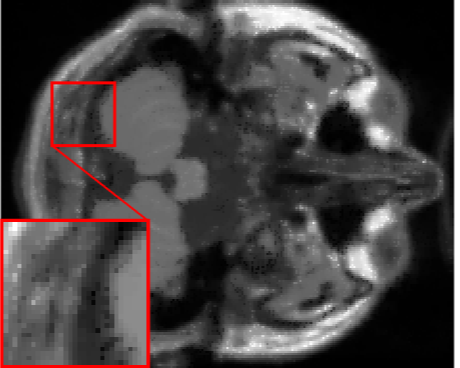}
	}
	\subfloat[NLTFNN$_{\rm (Ours)}$]{
		\includegraphics[width=0.18\linewidth]{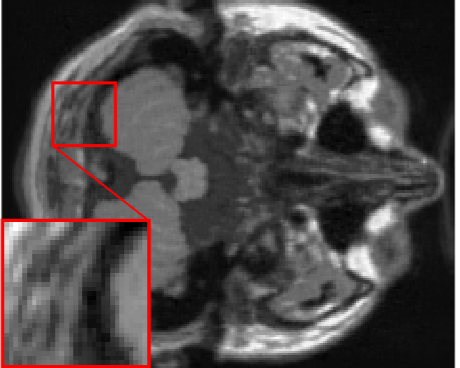}
	}
	\caption{The $1$st slice of MRI completion results with SR $= 20\%$. }
	\label{MRI_result}
\end{figure}

\begin{figure}[htbp] %h：浮动；t：顶部；b:底部；p：当前位置
	\centering		
	\subfloat[]{
		\includegraphics[width=0.44\linewidth]{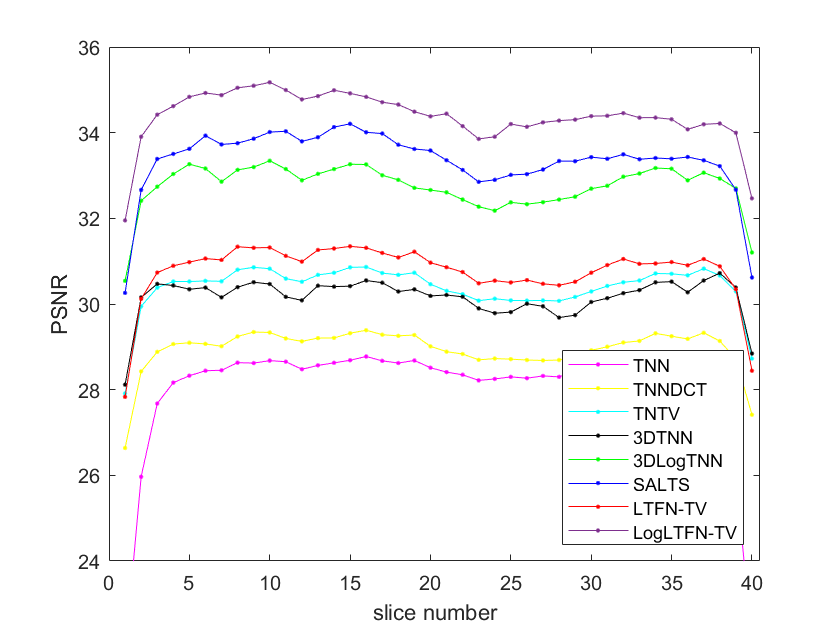}
	}
	\subfloat[]{
		\includegraphics[width=0.44\linewidth]{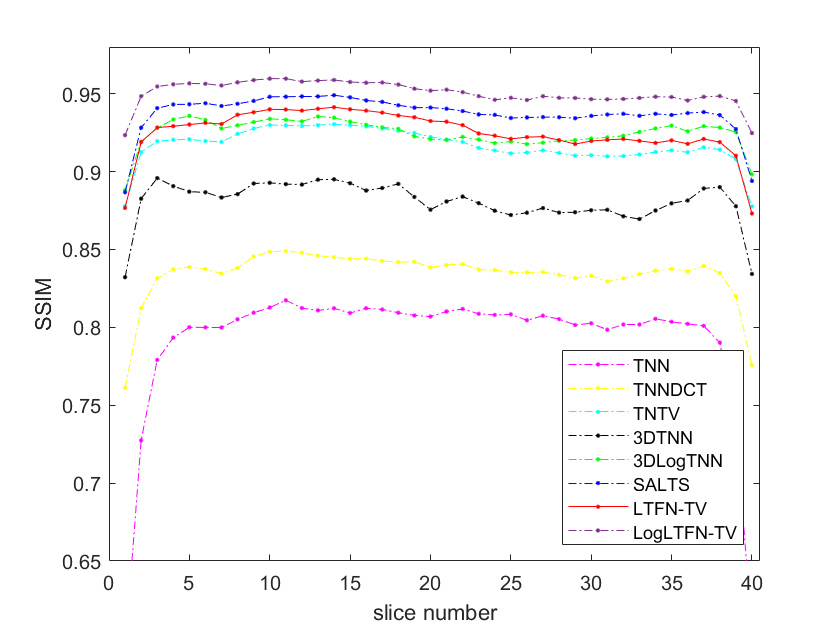}
	}
	\caption{The PSNR and SSIM values of different methods for MRI with SR $=20\%$. }
	\label{MRI_slice}
\end{figure}

\subsection{MSI Completion}

In this subsection, eight methods are compared for MSI\footnote{http://www.cs.columbia.edu/CAVE/databases/multispectral} recovery.
We firstly select the dataset \textit{cloth} of size $256 \times 256 \times 31$ and \textit{pompoms} of size $512 \times 512 \times 31$.
Then we test the methods with different SR and show the numerical results in Table \ref{cloth_t} and Table \ref{pompoms_t}, which indicate that our proposed NLTFNN method works best. 
In particular, as the SR increases, the difference between NLTFNN and 3DLogTNN becomes smaller, while the difference between NLTFNN and SALTS becomes larger.
Furthermore, Fig. \ref{pompoms_result} shows the visual comparison of the $20$th slice of MSI (\textit{pompoms}) completion results, where the images reconstructed by SALTS and NLTFNN have less noise than other algrithms.
This evidence suggests that the utilization of learnable transforms yields superior outcomes.
In Fig. \ref{cloth_slice}, the PSNR and SSIM values of each slice of \textit{cloth} are superior to other approaches.

\begin{table}[t]
	\centering
	\caption{\centering{Recovered results of MSI (\textit{cloth}) by different methods with different SR.}}
	\label{cloth_t}
	\scalebox{0.75}{
		\begin{tabular}{c c c c c c c c c c c c}
			\hline
			\multirow{2}{*}{SR} & \multirow{2}{*}{metric} &
			\multirow{2}{*}{Sample} & \multirow{2}{*}{TNN} &
			TNN & \multirow{2}{*}{TNTV} & 
			\multirow{2}{*}{3DTNN} & 3DLog & \multirow{2}{*}{SALTS} & LTFNN &  NLTFNN \\
			\multirow{2}{*}{} & \multirow{2}{*}{} & 
			\multirow{2}{*}{} & \multirow{2}{*}{} & 
			DCT & \multirow{2}{*}{} & 
			\multirow{2}{*}{} &  TNN & \multirow{2}{*}{} 
			& (Ours) & (Ours) \\
			\hline
			\multirow{3}{*}{10\% } & PSNR & 13.2948	& 25.9233	& 27.0751 & 25.5907	& 29.0002 & 29.9940	& 30.9833  & 25.5757 & $\bm{31.7470}$  \\
			\multirow{3}{*}{    } & SSIM & 0.0715 &	0.7551 &  0.7971 & 0.7425	& 0.8855 & 0.9151 &  0.8829   &
			0.7443	&   $\bm{0.9419}$\\
			\multirow{3}{*}{    } & RSE	 & 0.9487 & 0.2217 & 0.1941 & 0.2303	& 0.1556 & 0.1387 &	 0.1303 & 0.2307	&   $\bm{0.1134}$\\
			\hline
			\multirow{3}{*}{20\% } & PSNR & 13.8108	& 30.2017	& 31.8102 & 29.5139	 & 33.7327 & 36.6383 & 36.3176 & 30.0114 & $\bm{37.203}$ \\
			\multirow{3}{*}{    } & SSIM & 0.1355 &	  0.8963 & 0.9251 & 0.8983 &  0.9594 & 0.9791 & 0.9564    &
			0.9083	&   $\bm{0.9790}$\\
			\multirow{3}{*}{    } & RSE	 & 0.8940 & 0.1355 &  0.1126 & 0.1466	& 0.0902 &  0.0646 & 0.0745 &	  0.1385 &  $\bm{0.0605}$ \\
			\hline
			\multirow{3}{*}{30\%} & PSNR & 14.3876 & 33.0718	& 35.2512 & 33.0237	& 36.6552 & 41.0920 & 40.1376 & 33.8469	& $\bm{41.7787}$ \\
			\multirow{3}{*}{    } & SSIM &  0.2043 &	  0.9430 & 0.9646 & 0.9546	 &  0.9780	&  0.9921 &	 0.9778  & 0.9614	 &  $\bm{0.9924}$\\
			\multirow{3}{*}{    } & RSE	 & 0.8366 & 0.0973 &  0.0757 & 0.0979	 & 0.0644 & 0.0387 & 0.0505 &  0.0890	&   $\bm{0.0357}$  \\
			\hline
			\multirow{3}{*}{40\%} & PSNR & 15.0549	& 35.5169	& 38.1718 & 36.2458	& 39.3285 & 44.7202	& 42.9104 & 37.4157	&   $\bm{45.3932}$\\
			\multirow{3}{*}{    } & SSIM & 0.2755 &	  0.9660 & 0.9813 & 0.9778	& 0.9874 & 0.9964	&  0.9870 &
			0.9825	&  $\bm{0.9966 }$\\
			\multirow{3}{*}{    } & RSE	 & 0.7747 &  0.0735 &  0.0541 & 0.0675	 & 0.0474 & 0.0255 & 0.0371 &  0.0590	 & $\bm{ 0.0236}$  \\
			\hline
			\multirow{3}{*}{50\%} & PSNR & 15.8493 & 37.7489	& 40.7872 & 39.2273	& 41.5380 & 47.7902 & 45.1151 & 41.0284	&  $\bm{48.2910}$ \\
			\multirow{3}{*}{    } & SSIM & 0.3495 &	  0.9787 & 0.9896 & 0.9884	 & 0.9923 & 0.9982 & 0.9918 &
			0.9920	&   $\bm{0.9982}$ \\
			\multirow{3}{*}{    } & RSE	 & 0.7070 &	  0.0568 & 0.0400 & 0.0479 & 0.0367 & 0.0179 &  0.0285 &
			0.0389 &  $\bm{0.0169}$ \\
			\hline
		\end{tabular}
	}
\end{table}

%\begin{figure}[htbp] %h：浮动；t：顶部；b:底部；p：当前位置
%	\centering		
%	\setcounter{subfigure}{0}
%	\subfloat[Original]{
%		\includegraphics[width=0.19\linewidth]{}
%	}
%	\subfloat[Sample]{
%		\includegraphics[width=0.19\linewidth]{}
%	}
%	\subfloat[TNN]{
%		\includegraphics[width=0.19\linewidth]{}
%	}
%	\subfloat[TNNDCT]{
%		\includegraphics[width=0.19\linewidth]{}
%	}
%	\subfloat[TNTV]{
%		\includegraphics[width=0.19\linewidth]{}
%	}
%	\\
%	\setcounter{subfigure}{5}
%	\subfloat[3DTNN]{
%		\includegraphics[width=0.19\linewidth]{}
%	}
%	\subfloat[3DLogTNN]{
%		\includegraphics[width=0.19\linewidth]{}
%	}
%	\subfloat[SALTS]{
%		\includegraphics[width=0.19\linewidth]{}
%	}
%	\subfloat[LTFN-TV]{
%		\includegraphics[width=0.19\linewidth]{}
%	}
%	\subfloat[LogLTFN-TV]{
%		\includegraphics[width=0.19\linewidth]{}
%	}
%	\caption{The $10$th slice of MSI (\textit{cloth}) completion results with SR $= 40\%$. NO}
%	\label{cloth_result}
%\end{figure}

\begin{table}[t]
	\centering
	\caption{\centering{Recovered results of MSI (\textit{pompoms}) by different methods with different SR.}}
	\label{pompoms_t}
	\scalebox{0.75}{
		\begin{tabular}{c c c c c c c c c c c c}
			\hline
			\multirow{2}{*}{SR} & \multirow{2}{*}{metric} &
			\multirow{2}{*}{Sample} & \multirow{2}{*}{TNN} &
			TNN & \multirow{2}{*}{TNTV} & 
			\multirow{2}{*}{3DTNN} & 3DLog & \multirow{2}{*}{SALTS} & LTFNN &  NLTFNN \\
			\multirow{2}{*}{} & \multirow{2}{*}{} & 
			\multirow{2}{*}{} & \multirow{2}{*}{} & 
			DCT & \multirow{2}{*}{} & 
			\multirow{2}{*}{} &  TNN & \multirow{2}{*}{} 
			& (Ours) & (Ours) \\
			\hline
			\multirow{3}{*}{10\% } & PSNR & 12.8811	& 32.2644	& 35.5106 & 37.0725	& 37.6398 & 39.9217 & 42.2809  & 37.8310 & $\bm{43.1315}$  \\
			\multirow{3}{*}{    } & SSIM & 0.1006 & 0.8745	&  0.9299 & 0.9569	& 0.9578 & 0.9767 & 0.9705   &
			0.9612	&   $\bm{0.9827}$\\
			\multirow{3}{*}{    } & RSE	 & 0.9486 & 0.1018	&  0.0701 & 0.0585	&  0.0548 & 0.0422	& 0.0326 &
			0.0537	&   $\bm{0.0291}$\\
			\hline
			\multirow{3}{*}{20\% } & PSNR & 13.3929	& 37.9779	& 41.0731 & 41.5036	& 42.1425 & 46.6968	 & 45.4602 & 42.3591	& $\bm{48.8495}$ \\
			\multirow{3}{*}{    } & SSIM & 0.1587 & 0.9560 &  0.9774 & 0.9811	 &  0.9826	&  0.9931 & 0.9843 &
			0.9830	& $\bm{ 0.9954}$\\
			\multirow{3}{*}{    } & RSE	 & 0.8943 & 0.0528	& 0.0369 & 0.0352	& 0.0327 & 0.0193 & 0.0225 &
			0.0319	& $\bm{0.0151}$ \\
			\hline
			\multirow{3}{*}{30\%} & PSNR & 13.9735	& 41.7964	& 44.7940 & 44.6305	&  44.8185	& 50.4835 & 47.2009 & 45.3227 & $\bm{51.9813}$\\
			\multirow{3}{*}{    } & SSIM & 0.2133 & 0.9797	& 0.9897	& 0.9900 & 0.9899 & 0.9969 & 0.9889 &
			0.9908 & $\bm{ 0.9978}$ \\
			\multirow{3}{*}{    } & RSE	 & 0.8365 & 0.0340	&  0.0241 & 0.0245 & 0.0240 & 0.0125 &  0.0184 &
			0.0226 &  $\bm{0.0105}$  \\
			\hline
			\multirow{3}{*}{40\%} & PSNR & 14.6399 & 44.9546	& 47.7371 & 47.2721	& 46.8600 & 53.1677	& 48.4506 &
			47.8080	& $\bm{54.0588}$\\
			\multirow{3}{*}{    } & SSIM & 0.2658 &	  0.9895 &  0.9945 & 0.9943	 &  0.9934	& 0.9983 & 0.9914 &
			0.9946	&  $\bm{0.9986}$\\
			\multirow{3}{*}{    } & RSE	 & 0.7747 &	  0.0236	& 0.0172 & 0.0181 &  0.0190	&  0.0092 & 0.0159 &
			0.0170	&  $\bm{0.0083}$  \\
			\hline
			\multirow{3}{*}{50\%} & PSNR & 15.4327	& 47.6142	& 50.1774 & 49.6639	& 48.7591 & 55.2138	& 49.4564 &
			50.1270	 &  $\bm{55.7230}$ \\
			\multirow{3}{*}{    } & SSIM & 0.3183 & 0.9941	& 0.9968 & 0.9966 & 0.9956 & 0.9989 & 0.9930 &
			0.9967	&  $\bm{0.9991}$ \\
			\multirow{3}{*}{    } & RSE	 & 0.7071 & 0.0174	& 0.0129 & 0.0137	 & 0.0152 & 0.0073	& 0.0141 &
			0.0130	&  $\bm{0.0068}$ \\
			\hline
		\end{tabular}
	}
\end{table}

\begin{figure}[htbp] %h：浮动；t：顶部；b:底部；p：当前位置
	\centering		
	\setcounter{subfigure}{0}
	\subfloat[Original]{
		\includegraphics[width=0.18\linewidth]{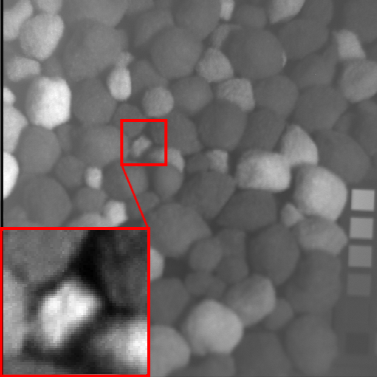}
	}
	\subfloat[Sample]{
		\includegraphics[width=0.18\linewidth]{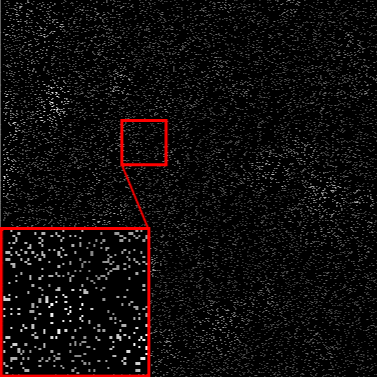}
	}
	\subfloat[TNN]{
		\includegraphics[width=0.18\linewidth]{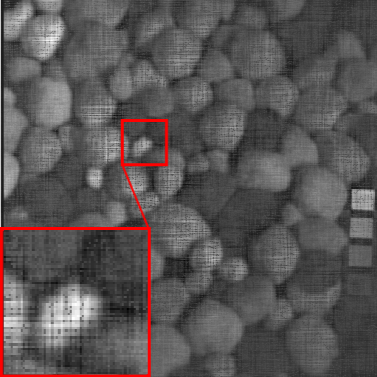}
	}
	\subfloat[TNNDCT]{
		\includegraphics[width=0.18\linewidth]{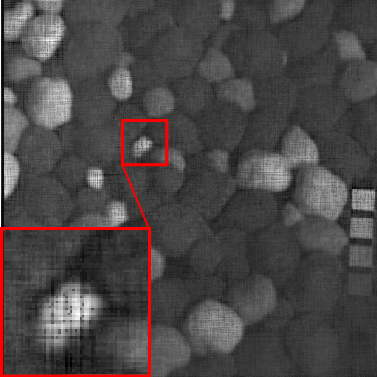}
	}
	\subfloat[TNTV]{
		\includegraphics[width=0.18\linewidth]{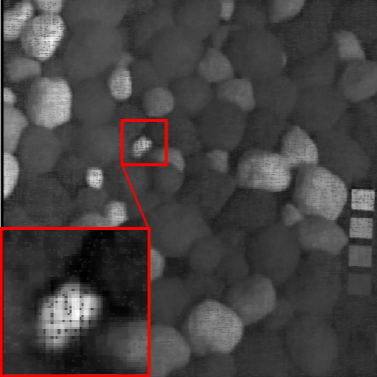}
	}
	\\
	\setcounter{subfigure}{5}
	\subfloat[3DTNN]{
		\includegraphics[width=0.18\linewidth]{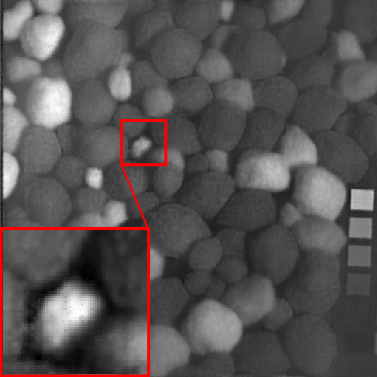}
	}
	\subfloat[3DLogTNN]{
		\includegraphics[width=0.18\linewidth]{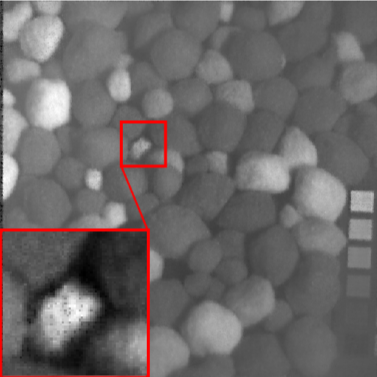}
	}
	\subfloat[SALTS]{
		\includegraphics[width=0.18\linewidth]{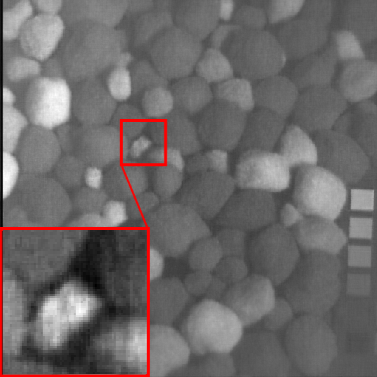}
	}
	\subfloat[LTFNN$_{\rm (Ours)}$]{
		\includegraphics[width=0.18\linewidth]{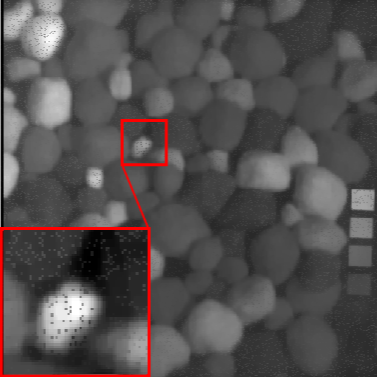}
	}
	\subfloat[NLTFNN$_{\rm (Ours)}$]{
		\includegraphics[width=0.18\linewidth]{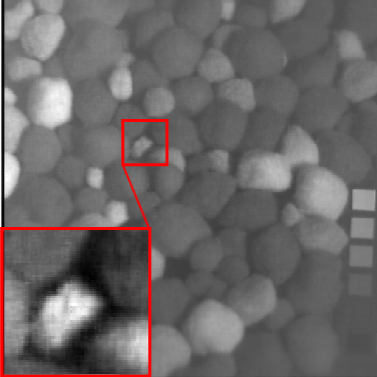}
	}
	\caption{The $20$th slice of MSI (\textit{pompoms}) completion results with SR $= 10\%$. }
	\label{pompoms_result}
\end{figure}

\begin{figure}[htbp] %h：浮动；t：顶部；b:底部；p：当前位置
	\centering		
	\subfloat[]{
		\includegraphics[width=0.44\linewidth]{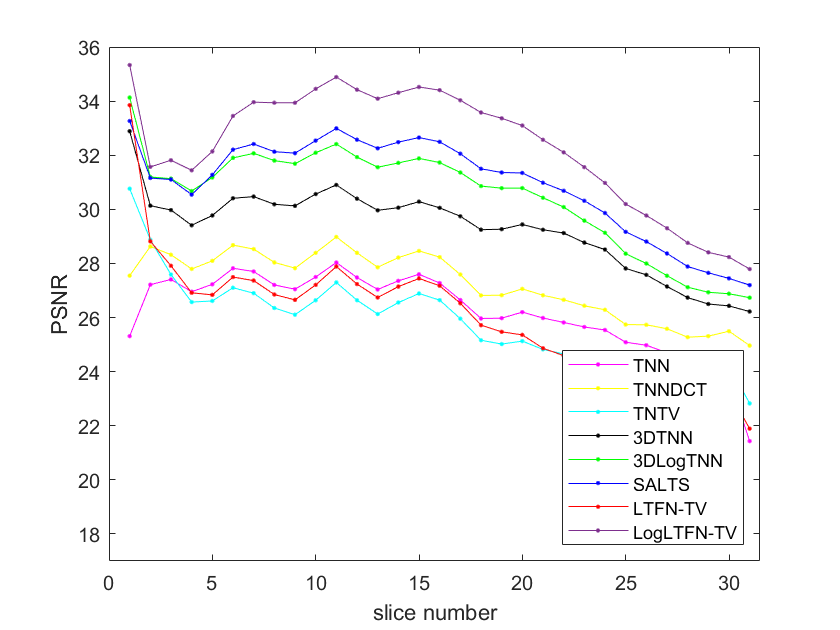}
	}
	\subfloat[]{
		\includegraphics[width=0.44\linewidth]{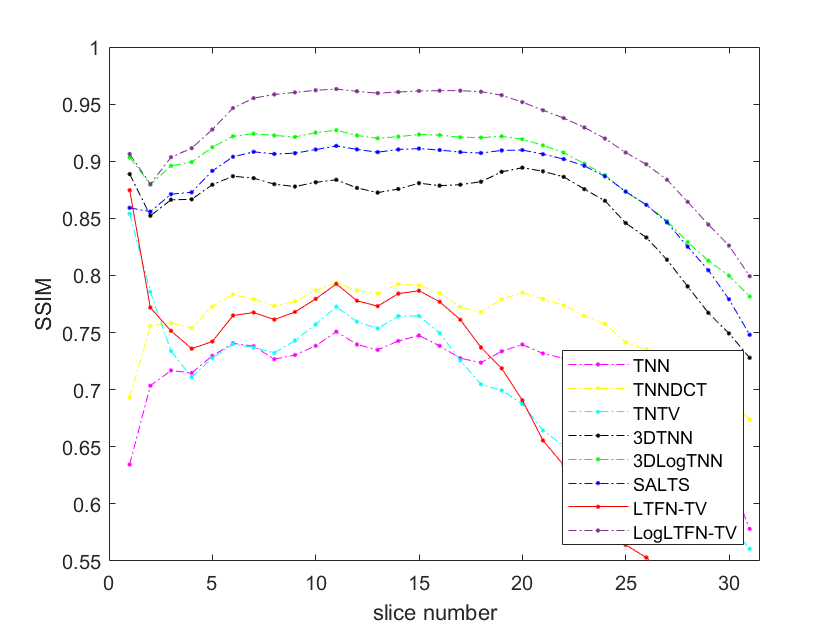}
	}
	\caption{The PSNR and SSIM values of different methods for MSI (\textit{cloth}) with SR $=10\%$. }
	\label{cloth_slice}
\end{figure}

\subsection{HSI Completion}

This subsection tests the HSI\footnote{https://personalpages.manchester.ac.uk/staff/d.h.foster//default.html} datasets \textit{scene6} and \textit{scene7} to validate the effectiveness of the proposed methods. 
To boost computational efficiency, we resized the datasets to $256 \times 256 \times 31$.
The recovered results are list in Table \ref{scene6_t} and Table \ref{scene7_t}, where NLTFNN has best performance. 
We give the visualization of the recovered results of \textit{scene6} in Fig. \ref{scene6_result}, which shows that the details of the image recovered by NLTFNN are most identical to the original image.
Moreover, Fig. \ref{scene7_slice} illustrates the numerical results of each slice of \textit{scene7}, demonstrating the superiority of NLTFNN.

\begin{table}[t]
	\centering
	\caption{\centering{Recovered results of HSI (\textit{scene6}) by different methods with different SR.}}
	\label{scene6_t}
	\scalebox{0.75}{
		\begin{tabular}{c c c c c c c c c c c c}
			\hline
			\multirow{2}{*}{SR} & \multirow{2}{*}{metric} &
			\multirow{2}{*}{Sample} & \multirow{2}{*}{TNN} &
			TNN & \multirow{2}{*}{TNTV} & 
			\multirow{2}{*}{3DTNN} & 3DLog & \multirow{2}{*}{SALTS} & LTFNN &  NLTFNN \\
			\multirow{2}{*}{} & \multirow{2}{*}{} & 
			\multirow{2}{*}{} & \multirow{2}{*}{} & 
			DCT & \multirow{2}{*}{} & 
			\multirow{2}{*}{} &  TNN & \multirow{2}{*}{} 
			& (Ours) & (Ours) \\
			\hline
			\multirow{3}{*}{10\% } & PSNR &  25.0928 & 39.2048	& 40.9338 & 39.7127	& 36.4496 & 42.3765 & 43.2676 & 39.0734	& $\bm{44.5069}$  \\
			\multirow{3}{*}{    } & SSIM & 0.2325 & 0.9484	&  0.9644 & 0.9565	& 0.8890 & 0.9734 & 0.9662  &
			0.9515	&  $\bm{0.9809}$\\
			\multirow{3}{*}{    } & RSE	 & 0.9483 & 0.1868 &  0.1531 & 0.1762 &  0.2565 & 0.1296	& 0.1301 &
			0.1896	&   $\bm{0.1014}$\\
			\hline
			\multirow{3}{*}{20\% } & PSNR & 25.6034	& 42.3665	& 44.9725 & 44.2163 & 41.0675 & 48.0585 & 46.3896  & 44.3902 &  $\bm{49.4450}$ \\
			\multirow{3}{*}{    } & SSIM & 0.3309 &	  0.9742 & 0.9858 & 0.9840	& 0.9609 & 0.9923 & 0.9818 &
			0.9848	&    $\bm{0.9937}$\\
			\multirow{3}{*}{    } & RSE	 & 0.8942 & 0.1298	&  0.0962	& 0.1049 & 0.1507 & 0.0674 & 0.0927 &
			0.1028	& $\bm{ 0.0575 }$ \\
			\hline
			\multirow{3}{*}{30\%} & PSNR & 26.1776 & 44.5761	& 47.7800 & 47.2208	& 43.5561 & 51.4354	& 47.9085 & 48.2597	&  $\bm{52.2420}$\\
			\multirow{3}{*}{    } & SSIM & 0.4169 & 0.9842	&  0.9926 & 0.9921	& 0.9785 & 0.9964	& 0.9867 &
			0.9935	&   $\bm{0.9966}$ \\
			\multirow{3}{*}{    } & RSE	 & 0.8370 & 0.1006	&  0.0696 & 0.0742	& 0.1132 & 0.0457 & 0.0765 &
			0.0659 &  $\bm{0.0416}$  \\
			\hline
			\multirow{3}{*}{40\%} & PSNR & 26.8522	& 46.9033	& 50.5919 & 50.1605	& 45.6538 &	 54.1374 & 48.9878 & 51.6159	&  $\bm{54.4957}$\\
			\multirow{3}{*}{    } & SSIM & 0.4936 &	  0.9901 & 0.9958 & 0.9957	& 0.9863 & 0.9979 &  0.9895 &
			0.9967	&   $\bm{0.9978}$\\
			\multirow{3}{*}{    } & RSE	 &  0.7744	& 0.0770	&  0.0503 & 0.0529 & 0.0889	 & 0.0335 &   0.0669 &
			0.0448 &   $\bm{0.0321}$  \\
			\hline
			\multirow{3}{*}{50\%} & PSNR & 27.6401	& 48.9498	& 52.7437 & 52.3789 & 47.0621 & 55.9495	& 49.9317  & 54.0492 &  $\bm{56.2385}$ \\
			\multirow{3}{*}{    } & SSIM & 0.5623	& 0.9938 &  0.9974 & 0.9974 & 0.9900 &  0.9986	&  0.9914  &
			0.9980	 &  $\bm{0.9986}$ \\
			\multirow{3}{*}{    } & RSE	 & 0.7073 &  0.0608	&  0.0393 &  0.0410 & 0.0756 &	 0.0272	&  0.0592 &
			0.0338	&   $\bm{0.0263}$ \\
			\hline
		\end{tabular}
	}
\end{table}

\begin{figure}[htbp] %h：浮动；t：顶部；b:底部；p：当前位置
	\centering		
	\setcounter{subfigure}{0}
	\subfloat[Original]{
		\includegraphics[width=0.18\linewidth]{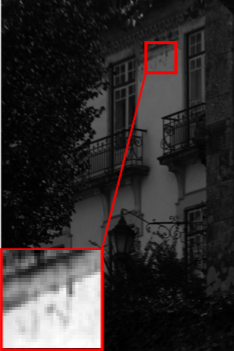}
	}
	\subfloat[Sample]{
		\includegraphics[width=0.18\linewidth]{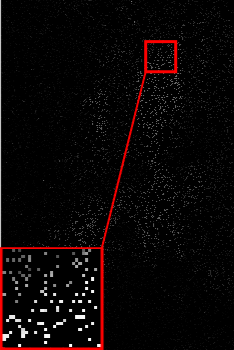}
	}
	\subfloat[TNN]{
		\includegraphics[width=0.18\linewidth]{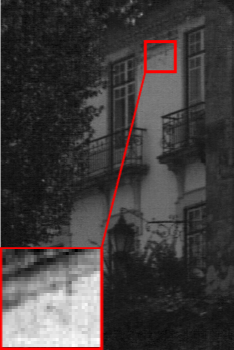}
	}
	\subfloat[TNNDCT]{
		\includegraphics[width=0.18\linewidth]{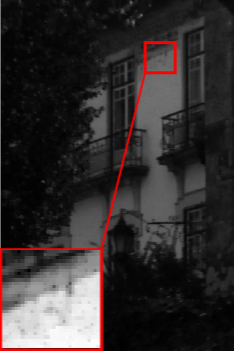}
	}
	\subfloat[TNTV]{
		\includegraphics[width=0.18\linewidth]{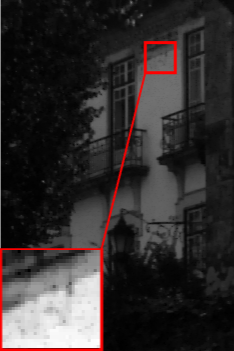}
	}
	\\
	\setcounter{subfigure}{5}
	\subfloat[3DTNN]{
		\includegraphics[width=0.18\linewidth]{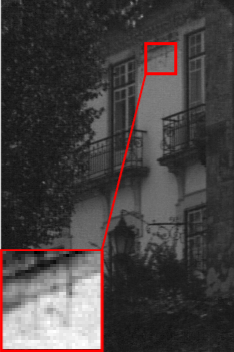}
	}
	\subfloat[3DLogTNN]{
		\includegraphics[width=0.18\linewidth]{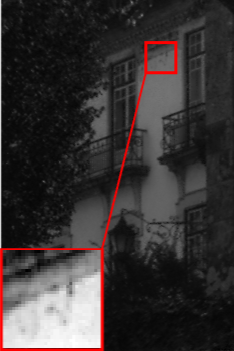}
	}
	\subfloat[SALTS]{
		\includegraphics[width=0.18\linewidth]{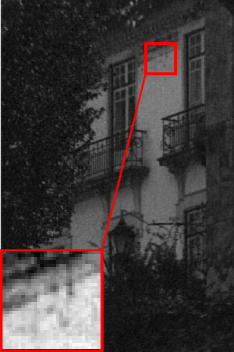}
	}
	\subfloat[LTFNN$_{\rm (Ours)}$]{
		\includegraphics[width=0.18\linewidth]{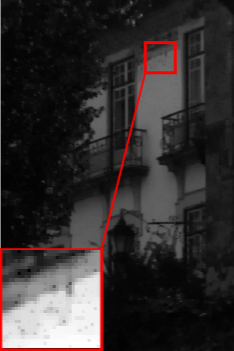}
	}
	\subfloat[NLTFNN$_{\rm (Ours)}$]{
		\includegraphics[width=0.18\linewidth]{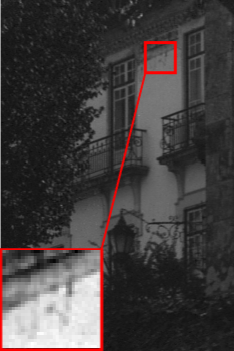}
	}
	\caption{The $9$th slice of HSI (\textit{scene6}) completion results with SR $= 10\%$. }
	\label{scene6_result}
\end{figure}

\begin{table}[t]
	\centering
	\caption{\centering{Recovered results of HSI (\textit{scene7}) by different methods with different SR.}}
	\label{scene7_t}
	\scalebox{0.75}{
		\begin{tabular}{c c c c c c c c c c c c}
			\hline
			\multirow{2}{*}{SR} & \multirow{2}{*}{metric} &
			\multirow{2}{*}{Sample} & \multirow{2}{*}{TNN} &
			TNN & \multirow{2}{*}{TNTV} & 
			\multirow{2}{*}{3DTNN} & 3DLog & \multirow{2}{*}{SALTS} & LTFNN &  NLTFNN \\
			\multirow{2}{*}{} & \multirow{2}{*}{} & 
			\multirow{2}{*}{} & \multirow{2}{*}{} & 
			DCT & \multirow{2}{*}{} & 
			\multirow{2}{*}{} &  TNN & \multirow{2}{*}{} 
			& (Ours) & (Ours) \\
			\hline
			\multirow{3}{*}{10\% } & PSNR & 15.3932	& 39.5143	& 41.2143 & 39.6929	& 35.8494 & 40.2315 & 43.5273  & 41.4016 & $\bm{43.8630}$  \\
			\multirow{3}{*}{    } & SSIM & 0.0975 &	  0.9633 & 0.9744 & 0.9687	& 0.9387 & 0.9744 & 0.9747 &
			0.9784	&   $\bm{0.9846}$\\
			\multirow{3}{*}{    } & RSE	 & 0.9487 & 0.0590	& 0.0485	& 0.0578 & 0.0900 & 0.0544	&  0.0394 &
			0.0475	&   $\bm{0.0358}$\\
			\hline
			\multirow{3}{*}{20\% } & PSNR & 15.9090	& 43.9860	& 45.6021 &	44.8187	& 39.7502 & 46.4044	& 46.4287 & 45.7616	&  $\bm{49.8029}$ \\
			\multirow{3}{*}{    } & SSIM & 0.1514 & 0.9852	&  0.9896 & 0.9885	& 0.9682 & 0.9912 & 0.9860 &
			0.9897	&   $\bm{0.9950}$\\
			\multirow{3}{*}{    } & RSE	 & 0.8940 & 0.0353	& 0.0293 & 0.0321	&  0.0574 & 0.0267 & 0.0282 &
			0.0288	&   $\bm{0.0181}$ \\
			\hline
			\multirow{3}{*}{30\%} & PSNR & 16.4835	& 46.7548	& 48.2326 & 47.8497	& 42.3314 & 50.0106	& 47.8376 & 49.2066	&  $\bm{52.0273}$\\
			\multirow{3}{*}{    } & SSIM & 0.1970 & 0.9915	&  0.9939 & 0.9937 &  0.9803 & 0.9955	& 0.9895 &
			0.9946	&  $\bm{0.9968}$ \\
			\multirow{3}{*}{    } & RSE	 & 0.8368 &	  0.0256	&  0.0216 & 0.0226 & 0.0427	&  0.0176 & 0.0239 &
			0.0193 &  $\bm{0.0140}$  \\
			\hline
			\multirow{3}{*}{40\%} & PSNR & 17.1544	& 49.0624	& 50.3835 & 50.2460	& 44.6623 & 52.5354 & 	48.8316 & 51.6011 &   $\bm{53.6385}$\\
			\multirow{3}{*}{    } & SSIM & 0.2394 &  0.9946 &  0.9960 & 0.9960	& 0.9876 & 0.9972 & 0.9914 &
			0.9966 &  $\bm{0.9977}$\\
			\multirow{3}{*}{    } & RSE	 & 0.7746 & 0.0197	&  0.0169 & 0.0172 & 0.0326 &  0.0132	&  0.0213 &
			0.0147 & $\bm{0.0116}$  \\
			\hline
			\multirow{3}{*}{50\%} & PSNR & 17.9493	& 50.9826	& 52.2138 & 52.1825	& 46.5790 & 54.3197 & 	49.6060 & 53.4978	&  $\bm{55.0581}$ \\
			\multirow{3}{*}{    } & SSIM & 0.2814 &  0.9964 &  0.9972	&  0.9973 & 0.9917	&  0.9981 &	 0.9927   & 0.9977 &   $\bm{0.9983}$ \\
			\multirow{3}{*}{    } & RSE	 & 0.7069 &  0.0158 &  0.0137 & 0.0137	&  0.0262 & 0.0107	&  0.0195 &
			0.0118	&   $\bm{0.0099}$ \\
			\hline
		\end{tabular}
	}
\end{table}

%\begin{figure}[htbp] %h：浮动；t：顶部；b:底部；p：当前位置
%	\centering		
%	\setcounter{subfigure}{0}
%	\subfloat[Original]{
%		\includegraphics[width=0.19\linewidth]{}
%	}
%	\subfloat[Sample]{
%		\includegraphics[width=0.19\linewidth]{}
%	}
%	\subfloat[TNN]{
%		\includegraphics[width=0.19\linewidth]{}
%	}
%	\subfloat[TNNDCT]{
%		\includegraphics[width=0.19\linewidth]{}
%	}
%	\subfloat[TNTV]{
%		\includegraphics[width=0.19\linewidth]{}
%	}
%	\\
%	\setcounter{subfigure}{5}
%	\subfloat[3DTNN]{
%		\includegraphics[width=0.19\linewidth]{}
%	}
%	\subfloat[3DLogTNN]{
%		\includegraphics[width=0.19\linewidth]{}
%	}
%	\subfloat[SALTS]{
%		\includegraphics[width=0.19\linewidth]{}
%	}
%	\subfloat[LTFN-TV]{
%		\includegraphics[width=0.19\linewidth]{}
%	}
%	\subfloat[LogLTFN-TV]{
%		\includegraphics[width=0.19\linewidth]{}
%	}
%	\caption{The $10$th slice of HSI (\textit{scene7}) completion results with SR $= 30\%$. clearer  than origin }
%	\label{scene7_result}
%\end{figure}

\begin{figure}[htbp] %h：浮动；t：顶部；b:底部；p：当前位置
	\centering		
	\subfloat[]{
		\includegraphics[width=0.44\linewidth]{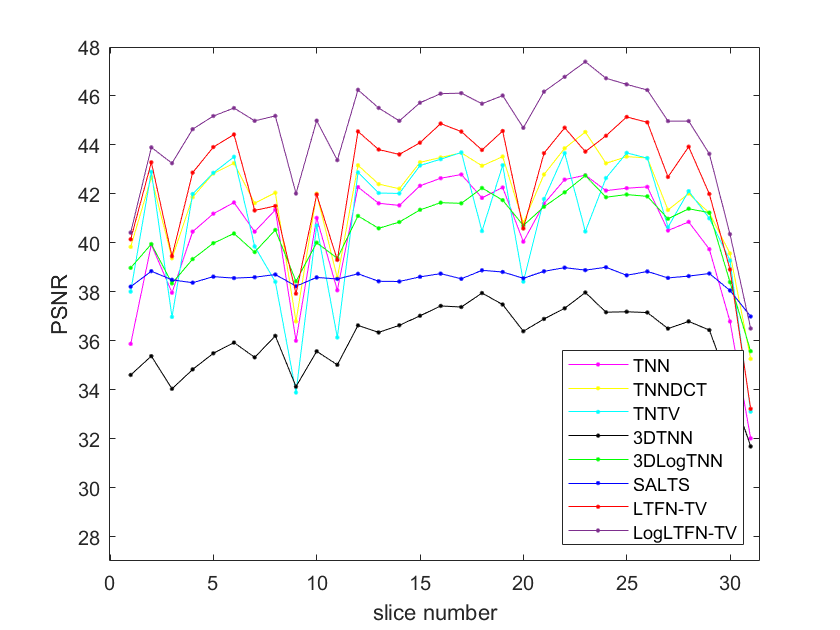}
	}
	\subfloat[]{
		\includegraphics[width=0.44\linewidth]{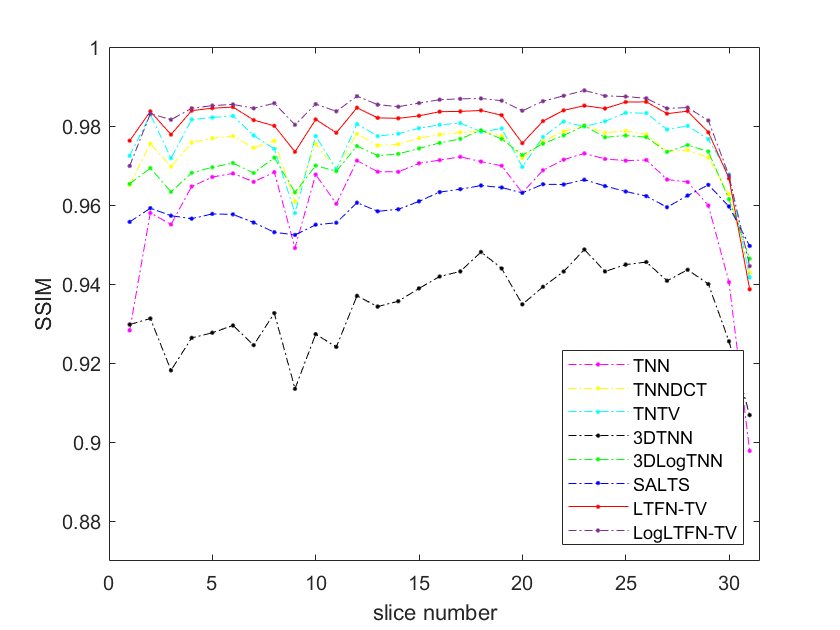}
	}
	\caption{The PSNR and SSIM values of different methods for HSI (\textit{scene7}) with SR $=10\%$. }
	\label{scene7_slice}
\end{figure}

\subsection{Video Completion}

In this subsection, we use eight algorithms to recover grayscale videos. 
First of all, we select three videos, \textit{Akiyo}\footnote{http://trace.eas.asu.edu/yuv/}, 
\textit{waterfall}\footnote{http://trace.eas.asu.edu/yuv/}
and \textit{road}\footnote{http://www.changedetection.net}.
For the improvement of the calculation time,
they are resized to 
$144 \times 176 \times 30$, 
$288 \times 352 \times 30$ 
and $158 \times 238 \times 24$, respectively.
The recovered results of videos are list in Table \ref{akiyo_t}, Table \ref{waterfall_t} and Table \ref{road_t}. 
It is worth mentioning that NLTFNN gives the most excellent results among all the videos, because their PSNR and SSIM values are the highest.
Additionally, we visualize the recovered results of \textit{waterfall} and \textit{road} in  Fig. \ref{waterfall_result} and Fig. \ref{road_result}, respectively.
Fig. \ref{waterfall_result} indicates that the videos recovered by SALTS and NLTFNN achieve satisfactory effects, which shows that the use of learnable transforms improves the effectiveness of recovery.
In Fig. \ref{road_result}, compared with 3DLogTNN and SALTS, our proposed methods produce clearer and smoother results, which shows that TV regularization helps to preserve details and maintain local smoothness.
Fig. \ref{akiyo_slice} compares the PSNR and SSIM values of each slice of \textit{Akiyo}, where NLTFNN gives the best in every frame.
Therefore, we claim that our proposed NLTFNN method yields a favorable performance in video recovery.

\begin{table}[t]
	\centering
	\caption{\centering{Recovered results of video (\textit{Akiyo}) by different methods with different SR.}}	
	\label{akiyo_t}
	\scalebox{0.75}{
		\begin{tabular}{c c c c c c c c c c c c}
			\hline
			\multirow{2}{*}{SR} & \multirow{2}{*}{metric} &
			\multirow{2}{*}{Sample} & \multirow{2}{*}{TNN} &
			TNN & \multirow{2}{*}{TNTV} & 
			\multirow{2}{*}{3DTNN} & 3DLog & \multirow{2}{*}{SALTS} & LTFNN &  NLTFNN \\
			\multirow{2}{*}{} & \multirow{2}{*}{} & 
			\multirow{2}{*}{} & \multirow{2}{*}{} & 
			DCT & \multirow{2}{*}{} & 
			\multirow{2}{*}{} &  TNN & \multirow{2}{*}{} 
			& (Ours) & (Ours) \\
			\hline
			\multirow{3}{*}{10\% } & PSNR & 7.5832	& 31.1920	& 31.6112 & 31.7595	& 29.0903 & 31.5222	& 34.8682  & 32.4460 &  $\bm{35.2485}$  \\
			\multirow{3}{*}{    } & SSIM & 0.0304 &  0.9359 &  0.9417	&  0.9570 & 0.8871 & 0.9347	&  0.9614 &
			0.9628	&    $\bm{0.9753}$\\
			\multirow{3}{*}{    } & RSE	 & 0.9486 & 0.0626	&  0.0597 & 0.0586	& 0.0797 & 0.0603 &	 0.0421 &
			0.0542 &   $\bm{0.0392}$\\
			\hline
			\multirow{3}{*}{20\% } & PSNR & 8.0926 & 34.7374	& 35.9171 & 36.0602	& 32.3059 & 37.2518	& 39.2065  & 37.8544 &  $\bm{40.2023}$ \\
			\multirow{3}{*}{    } & SSIM & 0.0493 & 0.9722 & 0.9791 & 0.9842	& 0.9591 & 0.9854 &	 0.9804 & 0.9890 &  $\bm{0.9923}$\\
			\multirow{3}{*}{    } & RSE	 & 0.8945 & 0.0416	&  0.0363	& 0.0357 &  0.0551 &  0.0312 & 	 0.0262 & 0.0291 &  $\bm{ 0.0222 }$ \\
			\hline
			\multirow{3}{*}{30\%} & PSNR &  8.6694	& 37.3219	& 38.9876 & 39.2168	& 35.1896 & 41.4010	& 41.7332 & 41.3029	&  $\bm{43.9832}$\\
			\multirow{3}{*}{    } & SSIM & 0.0687 & 0.9845 &  0.9895	&  0.9921 &  0.9783	&  0.9947 &  0.9865 &
			0.9948	&  $\bm{0.9960}$ \\
			\multirow{3}{*}{    } & RSE	 & 0.8370 & 0.0309	& 0.0255 & 0.0249	& 0.0395 & 0.0193 &  0.0197 &
			0.0195 & $\bm{ 0.0144}$  \\
			\hline
			\multirow{3}{*}{40\%} & PSNR & 9.3458 & 39.7557	& 41.9648	& 42.1872 & 37.4002	& 44.9311 & 43.5312 &
			44.6685	&  $\bm{47.3130}$\\
			\multirow{3}{*}{    } & SSIM & 0.0909 &  0.9907 & 0.9945 & 0.9957 & 0.9867 & 0.9974	&  0.9897 &
			0.9973	&  $\bm{0.9978}$\\
			\multirow{3}{*}{    } & RSE	 & 0.7743 &  0.0234	 & 0.0181	& 0.0177 & 0.0306	&  0.0129 & 0.0159 &
			0.0133	&   $\bm{0.0098}$  \\
			\hline
			\multirow{3}{*}{50\%} & PSNR & 10.1355 & 41.6650	& 44.3421 & 44.5144	& 39.3516 & 47.8180	& 45.0650 & 47.1966	& $\bm{ 49.6613 }$ \\
			\multirow{3}{*}{    } & SSIM &  0.1154	& 0.9940	& 0.9967 & 0.9974 &  0.9915 & 0.9985 & 0.9918 &
			0.9984	&   $\bm{0.9986}$ \\
			\multirow{3}{*}{    } & RSE	 & 0.7070 & 0.0187 &  0.0138	& 0.0135 & 0.0245 &  0.0092	&  0.0132 &
			0.0099	&  $\bm{ 0.0075}$ \\
			\hline
		\end{tabular}
	}
\end{table}

%\begin{figure}[htbp] %h：浮动；t：顶部；b:底部；p：当前位置
%	\centering		
%	\setcounter{subfigure}{0}
%	\subfloat[Original]{
%		\includegraphics[width=0.19\linewidth]{}
%	}
%	\subfloat[Sample]{
%		\includegraphics[width=0.19\linewidth]{}
%	}
%	\subfloat[TNN]{
%		\includegraphics[width=0.19\linewidth]{}
%	}
%	\subfloat[TNNDCT]{
%		\includegraphics[width=0.19\linewidth]{}
%	}
%	\subfloat[TNTV]{
%		\includegraphics[width=0.19\linewidth]{}
%	}
%	\\
%	\setcounter{subfigure}{5}
%	\subfloat[3DTNN]{
%		\includegraphics[width=0.19\linewidth]{}
%	}
%	\subfloat[3DLogTNN]{
%		\includegraphics[width=0.19\linewidth]{}
%	}
%	\subfloat[SALTS]{
%		\includegraphics[width=0.19\linewidth]{}
%	}
%	\subfloat[LTFN-TV]{
%		\includegraphics[width=0.19\linewidth]{}
%	}
%	\subfloat[LogLTFN-TV]{
%		\includegraphics[width=0.19\linewidth]{}
%	}
%	\caption{The $1$st frame of video (\textit{Akiyo}) completion results with SR $= 50\%$. NO!}
%	\label{akiyo_result}
%\end{figure}

\begin{table}[t]
	\centering
	\caption{\centering{Recovered results of video (\textit{waterfall}) by different methods with different SR.}}
	\label{waterfall_t}
	\scalebox{0.75}{
		\begin{tabular}{c c c c c c c c c c c c}
			\hline
			\multirow{2}{*}{SR} & \multirow{2}{*}{metric} &
			\multirow{2}{*}{Sample} & \multirow{2}{*}{TNN} &
			TNN & \multirow{2}{*}{TNTV} & 
			\multirow{2}{*}{3DTNN} & 3DLog & \multirow{2}{*}{SALTS} & LTFNN &  NLTFNN \\
			\multirow{2}{*}{} & \multirow{2}{*}{} & 
			\multirow{2}{*}{} & \multirow{2}{*}{} & 
			DCT & \multirow{2}{*}{} & 
			\multirow{2}{*}{} &  TNN & \multirow{2}{*}{} 
			& (Ours) & (Ours) \\
			\hline
			\multirow{3}{*}{10\% } & PSNR & 8.2525 & 27.7850	& 28.2329 & 28.6091 & 28.6180 & 28.4764 & 30.1634 &  28.2464	&  $\bm{30.8462 }$  \\
			\multirow{3}{*}{    } & SSIM & 0.0221 &	  0.7147 & 0.7280 & 0.7601 &  0.7691 & 0.7769 & 0.8123    &
			0.7357 &  $\bm{0.8532}$\\
			\multirow{3}{*}{    } & RSE	 & 0.9486 & 0.1001	 &  0.0951	&  0.0910 & 0.0910 & 0.0924	&  0.0769 &
			0.0949	&   $\bm{0.0704}$\\
			\hline
			\multirow{3}{*}{20\% } & PSNR &  8.7641	& 29.9828	& 30.6678 & 31.3272	& 31.6602 & 31.5862	& 32.4466 &  31.3516 &  $\bm{32.7594}$ \\
			\multirow{3}{*}{    } & SSIM & 0.0350 & 0.8175 &  0.8358 & 0.8673	& 0.8728 & 0.8768	&  0.8835   &
			0.8697 &  $\bm{0.9016 }$\\
			\multirow{3}{*}{    } & RSE	 & 0.8944 & 0.0777	&  0.0718 & 0.0666	& 0.0641 & 0.0646 & 0.0591 &
			0.0664	&   $\bm{0.0565}$ \\
			\hline
			\multirow{3}{*}{30\%} & PSNR & 9.3463 & 31.7631	 & 32.5286	 & 33.3334 &  33.4428 & 33.7873 &	34.1678  & 33.5139	&  $\bm{34.4777}$\\
			\multirow{3}{*}{    } & SSIM & 0.0489 & 0.8743	&  0.8901 & 0.9162	&  0.9130 & 0.9235	&  0.9202     &
			0.9205	&   $\bm{0.9317}$ \\
			\multirow{3}{*}{    } & RSE	 & 0.8364 &  0.0633 &  0.0580 & 0.0529	& 0.0522 &	 0.0502	&  0.0485 &
			0.0518	&  $\bm{0.0463}$  \\
			\hline
			\multirow{3}{*}{40\%} & PSNR & 10.0176	& 33.4703	& 34.2205 & 35.0463	 & 34.8316	& 35.5831 &	35.8117  & 35.3640 &  $\bm{36.8387}$\\
			\multirow{3}{*}{    } & SSIM & 0.0648 & 0.9132 &  0.9251 & 0.9442 &  0.9362 & 0.9482	& 0.9435  &  
			0.9487	&   $\bm{0.9604}$\\
			\multirow{3}{*}{    } & RSE	 & 0.7742 &	  0.0520	&  0.0477 & 0.0434	& 0.0445 &	0.0408	& 0.0403 & 0.0418	&   $\bm{0.0353}$  \\
			\hline
			\multirow{3}{*}{50\%} & PSNR & 10.8052	& 35.3004	& 35.9957 & 36.7881	& 36.2146 & 37.4996	& 37.4681  & 37.2523 &  $\bm{39.1167}$ \\
			\multirow{3}{*}{    } & SSIM & 0.0827 &	  0.9424 & 0.9501 & 0.9632	 &  0.9534	&  0.9664 & 0.9601   & 
			0.9674	&   $\bm{0.9768}$ \\
			\multirow{3}{*}{    } & RSE	 & 0.7071 & 0.0421	& 0.0389 & 0.0355	&  0.0379 & 0.0327	&  0.0334 &
			0.0337	&   $\bm{0.0272}$ \\
			\hline
		\end{tabular}
	}
\end{table}

\begin{figure}[htbp] %h：浮动；t：顶部；b:底部；p：当前位置
	\centering		
	\setcounter{subfigure}{0}
	\subfloat[Original]{
		\includegraphics[width=0.18\linewidth]{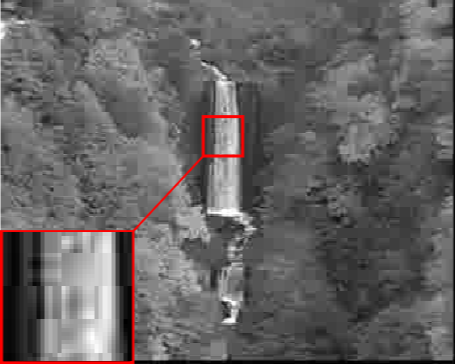}
	}
	\subfloat[Sample]{
		\includegraphics[width=0.18\linewidth]{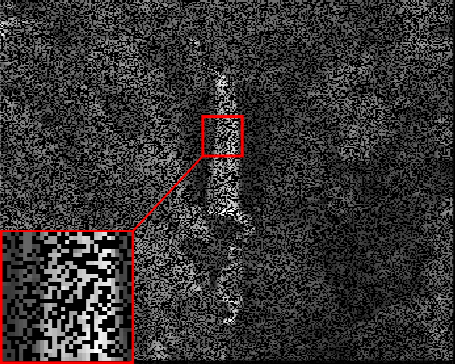}
	}
	\subfloat[TNN]{
		\includegraphics[width=0.18\linewidth]{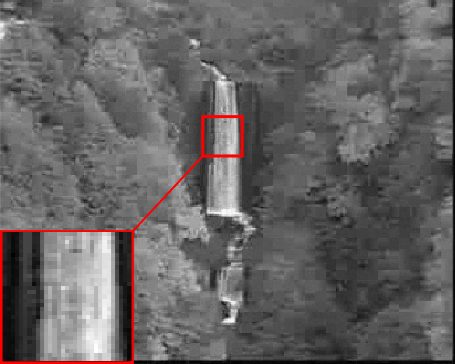}
	}
	\subfloat[TNNDCT]{
		\includegraphics[width=0.18\linewidth]{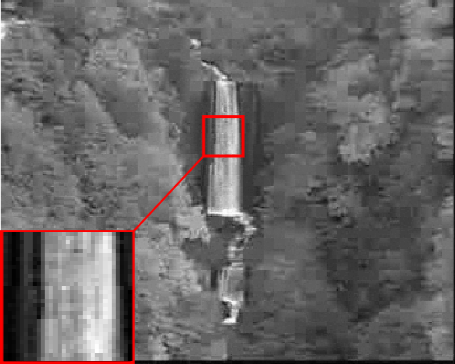}
	}
	\subfloat[TNTV]{
		\includegraphics[width=0.18\linewidth]{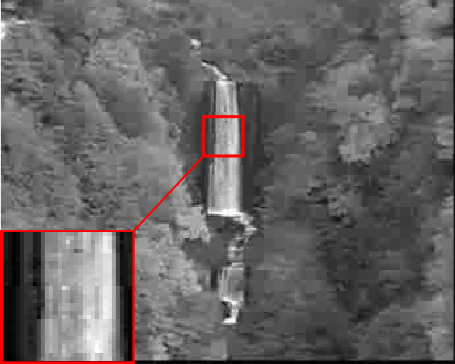}
	}
	\\
	\setcounter{subfigure}{5}
	\subfloat[3DTNN]{
		\includegraphics[width=0.18\linewidth]{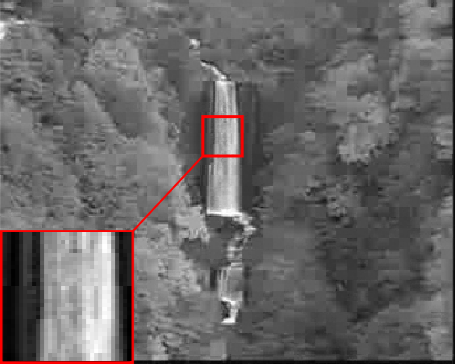}
	}
	\subfloat[3DLogTNN]{
		\includegraphics[width=0.18\linewidth]{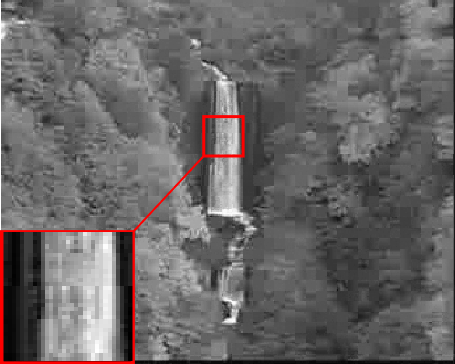}
	}
	\subfloat[SALTS]{
		\includegraphics[width=0.18\linewidth]{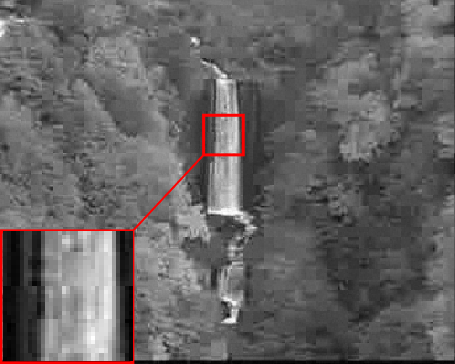}
	}
	\subfloat[LTFNN$_{\rm (Ours)}$]{
		\includegraphics[width=0.18\linewidth]{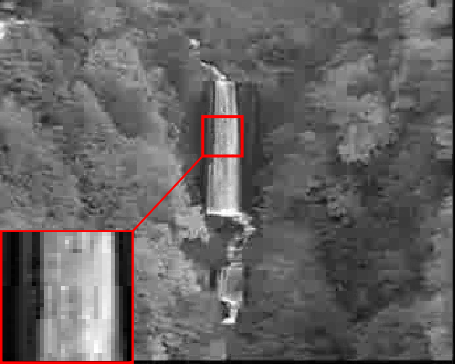}
	}
	\subfloat[NLTFNN$_{\rm (Ours)}$]{
		\includegraphics[width=0.18\linewidth]{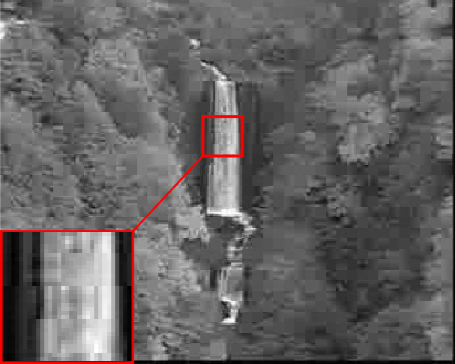}
	}
	\caption{The $25$th frame of video (\textit{waterfall}) completion results with SR $= 50\%$. }
	\label{waterfall_result}
\end{figure}

\begin{table}[t]
	\centering
	\caption{\centering{Recovered results of video (\textit{road}) by different methods with different SR.}}
	\label{road_t}
	\scalebox{0.75}{
		\begin{tabular}{c c c c c c c c c c c c}
			\hline
			\multirow{2}{*}{SR} & \multirow{2}{*}{metric} &
			\multirow{2}{*}{Sample} & \multirow{2}{*}{TNN} &
			TNN & \multirow{2}{*}{TNTV} & 
			\multirow{2}{*}{3DTNN} & 3DLog & \multirow{2}{*}{SALTS} & LTFNN &  NLTFNN \\
			\multirow{2}{*}{} & \multirow{2}{*}{} & 
			\multirow{2}{*}{} & \multirow{2}{*}{} & 
			DCT & \multirow{2}{*}{} & 
			\multirow{2}{*}{} &  TNN & \multirow{2}{*}{} 
			& (Ours) & (Ours) \\
			\hline
			\multirow{3}{*}{10\% } & PSNR & 6.6271	& 24.0809	& 24.3277 & 24.8606 & 24.8535 & 24.9577	& 26.1225 & 24.9259	& $\bm{27.0014 }$  \\
			\multirow{3}{*}{    } & SSIM & 0.0241 & 0.7059 &	0.7204 & 0.7520 & 0.8216 &  0.8241 &	0.8445 &  0.7696	&    $\bm{0.8444}$\\
			\multirow{3}{*}{    } & RSE	 & 0.9492 & 0.1273 &  0.1237 & 0.1163	& 0.1164 & 0.1150 &	  0.1009 &
			0.1155 &   $\bm{0.0909}$\\
			\hline
			\multirow{3}{*}{20\% } & PSNR & 7.1484	& 26.6714	& 26.9791 & 27.6698	& 27.8078 & 28.4412 & 28.6269 & 27.8264	& $\bm{29.6985}$ \\
			\multirow{3}{*}{    } & SSIM & 0.0464 & 0.8299 & 0.8413 & 0.8714	& 0.9090 &  0.9168 & 0.9072 &  0.8890 & $\bm{0.9234 }$\\
			\multirow{3}{*}{    } & RSE	 & 0.8939 &	 0.0944 & 0.0912 & 0.0842	 & 0.0829 & 0.0770 & 0.0755 &
			0.0827 &  $\bm{0.0666}$ \\
			\hline
			\multirow{3}{*}{30\%} & PSNR & 7.7245 & 28.6148	& 28.9304	& 29.7297 & 29.7194	& 30.6043 & 30.5144 &
			30.0144	&  $\bm{31.1664}$\\
			\multirow{3}{*}{    } & SSIM & 0.0696 &	 0.8857 &  0.8952 & 0.9228	& 0.9419 & 0.9498 & 0.9396   &
			0.9364	&   $\bm{0.9495}$ \\
			\multirow{3}{*}{    } & RSE	 & 0.8365 &	  0.0755 & 0.0728 & 0.0664 & 0.0665 &  0.0600	&   0.0608 &
			0.0643	&   $\bm{0.0563}$  \\
			\hline
			\multirow{3}{*}{40\%} & PSNR & 8.3869 & 30.3946	& 30.6238	& 31.4661 & 31.3654	& 32.6165 &	32.1836 & 31.8638	&  $\bm{33.1777}$\\
			\multirow{3}{*}{    } & SSIM & 0.0954 &	  0.9215 &  0.9282	&  0.9506 & 0.9599 &  0.9667 & 0.9578 &	  0.9606 &  $\bm{0.9687}$\\
			\multirow{3}{*}{    } & RSE	 &  0.7751 & 0.0615	&  0.0599 & 0.0544	& 0.0550 & 0.0476 & 0.0502 &
			0.0519 &  $\bm{0.0447}$  \\
			\hline
			\multirow{3}{*}{50\%} & PSNR & 9.1851 & 32.0474 & 32.2356	& 33.0579 & 32.7534	& 34.2839 & 33.8416  & 33.5625 &  $\bm{34.9272}$ \\
			\multirow{3}{*}{    } & SSIM &  0.1249	& 0.9455 & 0.9502 & 0.9673	& 0.9708 &  0.9767 & 0.9702   &
			0.9745 &   $\bm{0.9786}$ \\
			\multirow{3}{*}{    } & RSE	 & 0.7071 & 0.0509 &  0.0498 & 0.0453 & 0.0469 & 0.0393 & 0.0415 &
			0.0427 &  $\bm{0.0365}$ \\
			\hline
		\end{tabular}
	}
\end{table}

\begin{figure}[htbp] %h：浮动；t：顶部；b:底部；p：当前位置
	\centering		
	\setcounter{subfigure}{0}
	\subfloat[Original]{
		\includegraphics[width=0.18\linewidth]{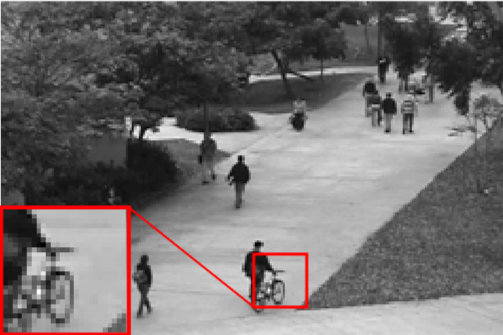}
	}
	\subfloat[Sample]{
		\includegraphics[width=0.18\linewidth]{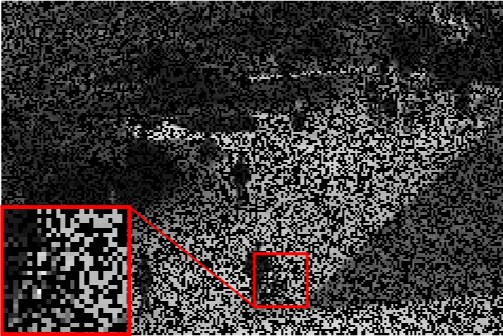}
	}
	\subfloat[TNN]{
		\includegraphics[width=0.18\linewidth]{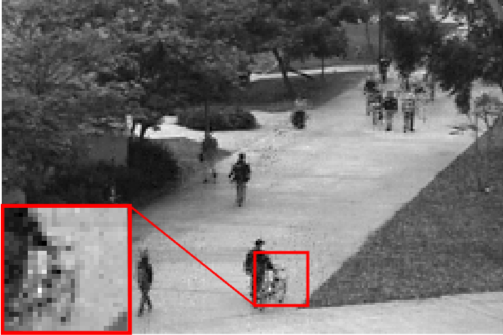}
	}
	\subfloat[TNNDCT]{
		\includegraphics[width=0.18\linewidth]{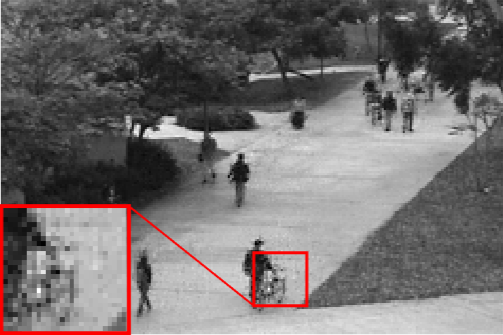}
	}
	\subfloat[TNTV]{
		\includegraphics[width=0.18\linewidth]{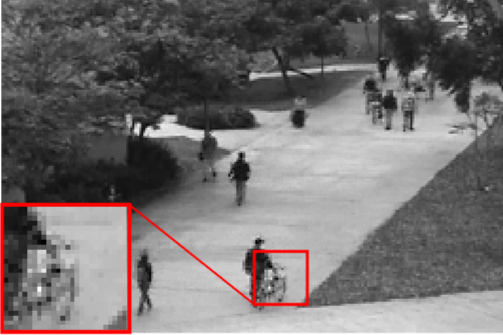}
	}
	\\
	\setcounter{subfigure}{5}
	\subfloat[3DTNN]{
		\includegraphics[width=0.18\linewidth]{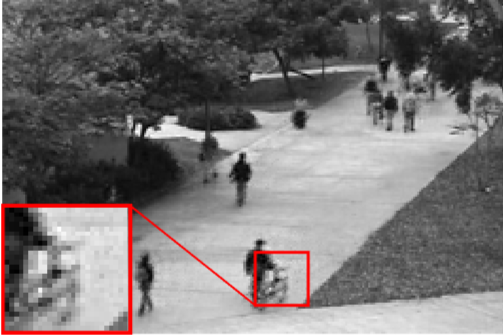}
	}
	\subfloat[3DLogTNN]{
		\includegraphics[width=0.18\linewidth]{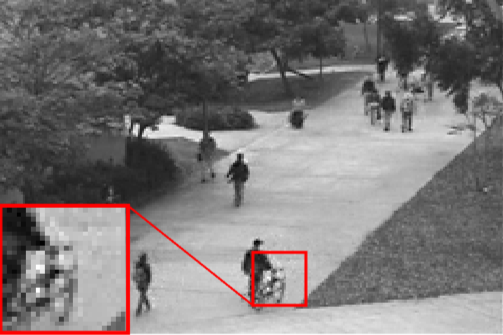}
	}
	\subfloat[SALTS]{
		\includegraphics[width=0.18\linewidth]{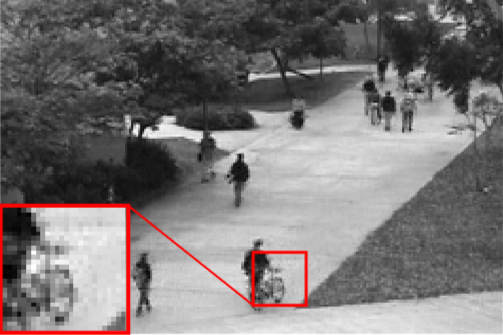}
	}
	\subfloat[LTFNN$_{\rm (Ours)}$]{
		\includegraphics[width=0.18\linewidth]{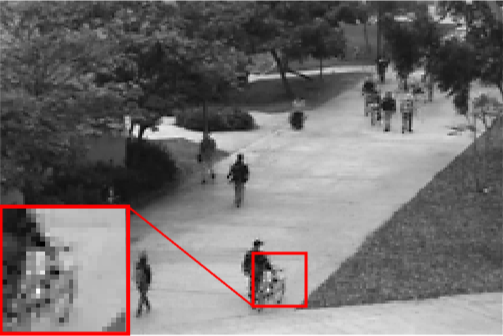}
	}
	\subfloat[NLTFNN$_{\rm (Ours)}$]{
		\includegraphics[width=0.18\linewidth]{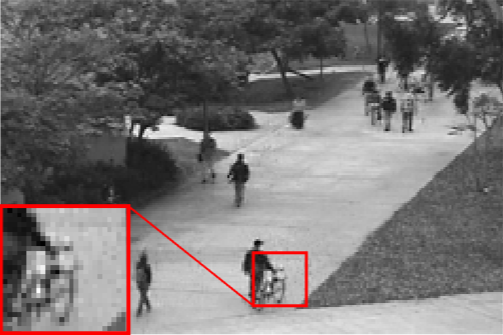}
	}
	\caption{The $2$nd frame of video (\textit{road}) completion results with SR $= 50\%$. }
	\label{road_result}
\end{figure}

\begin{figure}[htbp] %h：浮动；t：顶部；b:底部；p：当前位置
	\centering		
	\subfloat[]{
		\includegraphics[width=0.44\linewidth]{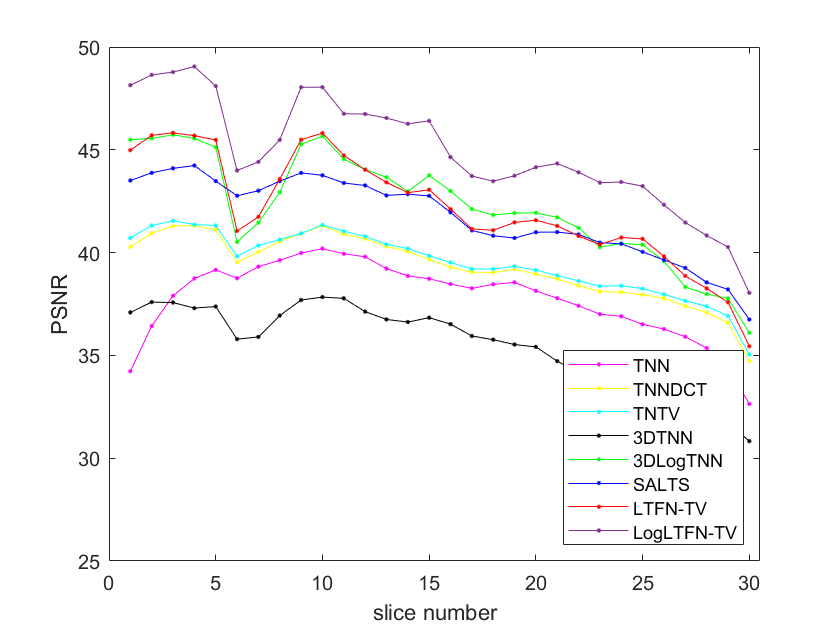}
	}
	\subfloat[]{
		\includegraphics[width=0.44\linewidth]{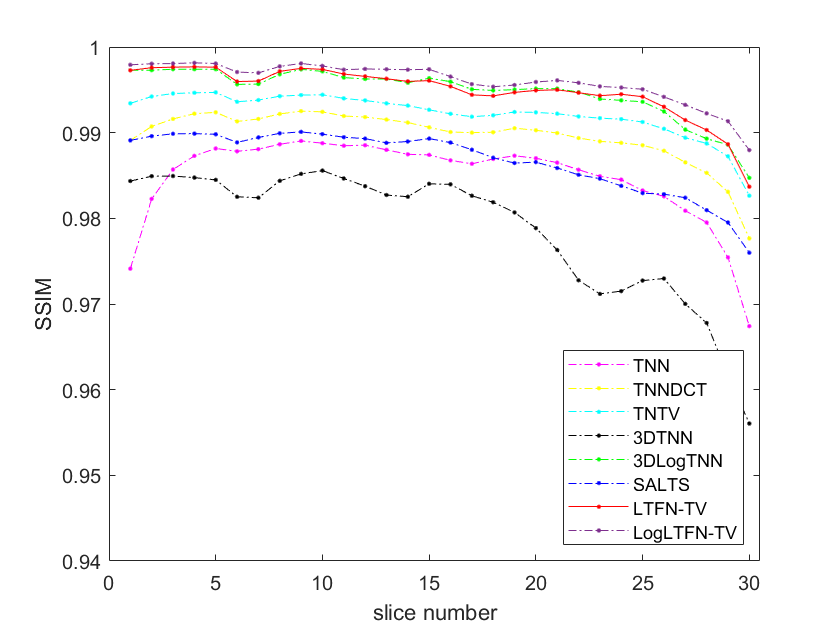}
	}
	\caption{The PSNR and SSIM values of different methods for video (\textit{Akiyo}) with SR $=30\%$. }
	\label{akiyo_slice}
\end{figure}

\section{Conclusion}

In this paper, we have proposed a novel LRTC model called NLTFNN, which utilizes the LTFNNLog and TV regularization.
Then the framework of ADMM is developed to solve NLTFNN model and the convergence theorems have also been presented.
Furthermore, the visual and numerical results in comparison with the other methods indicate the superior recovery performance of NLTFNN, demonstrating its flexibility to fully exploit low-rankness of the entire data and the ability to investigate internal smoothness.

%% If you have bibdatabase file and want bibtex to generate the
%% bibitems, please use
%%
\bibliographystyle{elsarticle-num} 
\bibliography{ref}

%% else use the following coding to input the bibitems directly in the
%% TeX file.

% \begin{thebibliography}{00}

% %% \bibitem{label}
% %% Text of bibliographic item

% \bibitem{}

% \end{thebibliography}
\end{document}